\renewcommand{\email}[2][]{% 
  \ifx\emails\@empty\relax\else{\g@addto@macro\emails{,\space}}\fi%
  \@ifnotempty{#1}{\g@addto@macro\emails{\textrm{(#1)}\space}}% 
  \g@addto@macro\emails{#2}%
}
\title{Commuting unbounded homotopy limits with Morava K-theory}
\author{Gabriel Angelini-Knoll$^*$ \and Andrew Salch$^\dagger$}
\address{Universit\'e Sorbonne Paris Nord,
LAGA, 
93430, Villetaneuse, 
France.$^{*}$
}
\address{Department of Mathematics, Wayne State University\\ 
Detroit, Michigan, U.S.A.$^\dagger$}
\email{gak@math.fu-berlin.de,asalch@wayne.edu}
\begin{document}
\begin{abstract}
This paper provides conditions for Morava $K$-theory to commute with certain homotopy limits. These conditions extend previous work on this question by allowing for homotopy limits of sequences of spectra that are not uniformly bounded below. 
As an application, we prove the $K(n)$-local triviality (for sufficiently large $n$) of the algebraic $K$-theory of algebras over truncated Brown--Peterson spectra, building on work of Bruner--Rognes and extending a classical theorem of Mitchell on 
$K(n)$-local triviality of the algebraic K-theory spectrum of the integers for large enough $n$.
\end{abstract} 
\maketitle
 
\tableofcontents 
  
\section{Introduction}   
Given a generalized homology theory $E_*$ and a sequence 
\[\dots \rightarrow X_{2} \rightarrow X_{1} \rightarrow X_0\] 
of spectra, one often needs to know whether there is an isomorphism
\[ \underset{i}{\lim\thinspace} E_*(X_i)\cong E_*(\underset{i}{\holim}X_i).\]
This cannot be true in full generality. For example, the limit of the sequence 
\[ \dots \rightarrow S/p^2\rightarrow S/p\]
is the $p$-complete sphere $S_p^{\wedge}$. Consequently,
\[H_*(\underset{i}{\holim\thinspace} S/p^i;\mathbb{Q})\cong H_*(S_p^{\wedge};\mathbb{Q})\cong \mathbb{Q}_p.\] 
On the other hand, $H_*(S/p^i;\mathbb{Q})\cong 0$ for $i\ge 1$, and therefore 
\[\underset{i}{\lim\thinspace} H_*(S/p^i;\mathbb{Q})\cong 0.\] 
Additionally, $R^1\underset{i}{\lim\thinspace} H_*(S/p^i;\mathbb{Q})$ vanishes, so we cannot even recover the groups $H_*(\underset{i}{\holim\thinspace}S/p^i;\mathbb{Q})$
from a ``Milnor sequence.'' This example can be generalized to Morava K-theory $K(n)_*$ for $n>0$ by letting $V$ be a type $n$, finite spectrum
with $v_n$-power self map
 $v\colon \Sigma^dV\to V$ and considering the limit of the sequence 
\[ \dots \rightarrow \cof(v^2) \rightarrow \cof(v)\]
where $\cof(v^i)$ denotes the cofiber of the iterated composite $v^i\colon \Sigma^{di}V\to V$. 

This motivates the question: what conditions on $E_*$ and the sequence
\[ \dots \rightarrow X_2 \rightarrow X_1 \rightarrow X_0\] 
allow us to commute the homotopy limit with $E_*$? There are known results along these lines, most famously a commonly-used result of Adams from \cite{MR1324104}, but the usual hypotheses are that 
\begin{itemize}
\item the spectra $X_i$ are uniformly bounded below, and 
\item the homology theory $E_*$ is connective. \end{itemize}
In this paper, we remove each of these assumptions, under some reasonable additional hypotheses. Our particular focus is on the case where $E_*$ is a Morava $K$-theory $K(n)_*$. 

There is previous unpublished work of Sadofsky \cite{Sad1,Sad2} on this question in the case when each $X_{i}$ is $K(n)$-local. Given a seqeunce 
\[ \dots \rightarrow X_2 \rightarrow X_1 \rightarrow X_0\] 
of $K(n)$-local spectra, Sadofsky constructs a spectral sequence computing the groups $K(n)_{*}(\lim_{i}X_{i})$ whose $E_{2}$-term is the right derived functors of lim in the category of $K(n)_{*}K(n)$-comodules (cf. \cite[Theorem A.0.1]{Pet20}). One may therefore view the higher right derived functors in $K(n)_{*}K(n)$-comodules as an obstruction to commuting the limit with Morava K-theory. 
Our results are, in a sense, orthogonal to the work of Sadofsky since we consider cases where the Sadofsky spectral sequence does not provide useful information because it does not convergence. 

This paper is written with a view towards filtered spectra that arise when studying topological periodic cyclic homology, 
\[TP(R):=THH(R)^{t\mathbb{T}}.\]
In particular, the Greenlees filtration \cite{MR908451} on topological periodic cyclic homology is not uniformly bounded below. Nevertheless, these filtered spectra often have nice enough homological properties to apply the main result of this paper. 

Following the red-shift program of Ausoni--Rognes \cite{AR08}, we are most interested in the chromatic complexity of topological periodic cyclic homology and related invariants. Therefore, a generalized homology theory of primary interest is Morava K-theory $K(n)_*$. Calculating Morava $K$-theory of topological periodic cyclic homology using the Greenlees filtration requires that one be able to commute a non-bounded-below generalized homology theory (Morava K-theory) with a non-uniformly-bounded-below homotopy limit, so existing results on generalized homology of limits, like Adams' theorem from \cite{MR1324104} reproduced as Theorem \ref{adams thm on holims} below, do not suffice. 

Our main result may then be summarized as follows.
\begin{theorem}[Theorem \ref{thm on holim of margolis-acyclics with lim1 vanishing}]\label{main thm}
Fix an integer $M$ and a prime $p$. Let 
\[\dots \rightarrow X_{2} \rightarrow X_{1} \rightarrow X_0  \rightarrow  X_{-1}  \rightarrow  \dots \] 
be a sequence of spectra satisfying:
\begin{enumerate}
\item each $X_i$ is bounded below and $p$-complete,
\item  for each $i$, the graded $\mathbb{F}_{p}$-vector space $H_*(X_i;\mathbb{F}_p)$ is finite type,
\item  the sequence of graded $\mathbb{F}_p$-vector spaces $\{H(H_*(X_i ; \mathbb{F}_p),Q_{n})\}$ is pro-isomorphic to zero, 
\item  there exists an integer $L$ such that for each $s\ge L$ the groups $\pi_{s}(\lim_{i}X_{i})/p$ are finite, and
\item  for each $i$, the $A_*$-comodule primitives of $H_*(X_{i};\mathbb{F}_p)$ are trivial in grading degrees $\ge N$.
\end{enumerate}
Then $\lim_{i}X_{i}$ is $K(n)$-acyclic. 
\end{theorem}

As the main application in the present paper, we prove a higher chromatic height analogue of Mitchell's theorem for algebraic K-theory of truncated Brown--Peterson spectra, building on work of Bruner--Rognes \cite{BR05}. In particular, Mitchell proves in \cite{Mit90} that 
$K(m)_*(K(\mathbb{Z}))$ vanishes
for $m\ge 2$, and consequently the algebraic $K$-theory of every $H\mathbb{Z}$-algebra is $K(m)$-acyclic. First, we fix conventions.
\begin{definition}[{cf. \cite[Definition 4.1]{LN14}}]\label{forms} Let $n$ be an integer, let $m\in \{ 1, 2, \dots \} \cup \{ \infty\}$, and let $p$ be a prime number. By a {\em $p$-primary $E_m$ form of $BP\langle n\rangle$} we mean a $p$-local $E_m$ ring spectrum $R$ equipped with a complex orientation $MU_{(p)}\rightarrow R$ such that the composite map
\[ \mathbb{Z}_{(p)}[v_1, \dots ,v_n] \hookrightarrow \pi_{*}BP \hookrightarrow \pi_*(MU_{(p)}) \rightarrow \pi_*R\]
is an isomorphism.
\end{definition}
We also note that, for every $m$, any two $p$-primary $E_m$ forms of $BP\langle n\rangle$ become homotopy-equivalent (as spectra) after $p$-completion. This is a consequence of the main theorem of \cite{AL17}. 

Our proof builds on a result of Bruner and Rognes \cite[Prop. 6.1]{BR05}, in which it is proven that the $E_{\infty}$-term of the homological homotopy fixed-point spectral sequence 
\begin{align}
\label{HFPSS first mention}H^*(\mathbb{T},H_*(THH(R);\mathbb{F}_p)) &\Rightarrow H^c_*(TC^{-}(R);\mathbb{F}_p)\end{align}
is isomorphic to $\left( P(t)\otimes M_1\otimes M_2 \right) \oplus T$ in the case $R$ is a $p$-primary $E_{\infty}$-form of $BP\langle n\rangle$. Here $P(t)$ and $M_1$ and $M_2$ and $T$ are certain comodules over the dual Steenrod algebra; see Section \ref{A higher height Mitchell theorem} for an explicit description of these comodules.

In this paper, we offer some applications of our main theorem to Morava $K$-theory of algebraic $K$-theory spectra of various forms of $BP\langle n\rangle$.

We do not really need the full strength of an $E_{\infty}$ form of $BP\langle n\rangle$: for our arguments, it is enough to have an $E_2$-form $R$ of $BP\langle n\rangle$ {\em such that the $E_{\infty}$-page of spectral sequence \eqref{HFPSS first mention} is isomorphic to that computed by Bruner and Rognes as a comodule over the $p$-primary dual Steenrod algebra}\footnote{We suspect, but not made serious efforts to prove, that a significantly weaker form of $BP\langle n\rangle$ suffices for this. See Remark \ref{conditional remark}, below, for further discussion.}. This leads us to the following assumption:
\begin{runningassumption}\label{assumption}
Let $B\langle n\rangle$ be a $p$-primary $E_{2}$-form of $BP\langle n\rangle$ such that the conclusion of \cite[Prop. 6.1]{BR05} holds. 
\end{runningassumption}
By the previous discusssion, we know that Running Assumption \ref{assumption} holds for and $p$-primary $E_{\infty}$-form of $BP\langle n\rangle$, which exist at all primes when $n=1$ by \cite{MR2182697}, when $p=n=2$ by \cite{LN14} and when $n=2$ and $p=3$ by \cite{MR2671186}. 
Let $K(R)$ denote the algebraic K-theory spectrum associated to an $E_{1}$ ring spectrum $R$. 
\begin{theorem}[Theorem \ref{main application}]\label{main application thm}
Fix a prime $p\ge 3$ and let $B\langle n\rangle$ be a $p$-primary $E_{2}$ form of $BP\langle n\rangle$ satisfying Running Assumption \ref{assumption}. For example, let  $B\langle n\rangle$ be any $p$-primary $E_{\infty}$ form of $BP\langle n\rangle$. Let $m$ be an integer, $m\geq n+2$.
Then there is a weak equivalence
\[L_{K(m)}K (B\langle n \rangle )\simeq 0 .\]
Consequently, the algebraic $K$-theory spectrum $K(A)$ is $K(m)$-acyclic for every $B\langle n\rangle$-algebra $A$.
\end{theorem} 

This result recovers the vanishing of $K(m)_*(K(\mathbb{Z}_{(p)}))$ for $m\ge 2$ by Mitchell \cite{Mit90}, since $H\mathbb{Z}_{(p)}$ is an $E_{\infty}$ form of $BP\langle 0\rangle$. Our result also proves that $K(m)_*(K(\ell))=0$ for $m\ge 3$ and $p\ge 3$, since the Adams summand $\ell$ is an $p$-primary $E_{\infty}$ form of $BP\langle n\rangle$ by \cite{MR2182697}. This recovers the vanishing of $K(m)_*(K(\ell))$ for $m\ge 3$ and $p\ge 5$ proven by Ausoni--Rognes \cite{AR02}, although our proof uses entirely different methods. At $p=3$,  there is an $E_{\infty}$-ring spectrum $\text{taf}^{D}$, which is an $E_{\infty}$ form of $BP\langle 2\rangle$ constructed by Hill--Lawson \cite{MR2671186} using a Shimura variety $\mathcal{X}^{D}$ associated to a quaternion algebra of discriminant $14$ . Our result therefore also proves that $K(m)_*K(\text{taf}^{D})$ vanishes for $m\ge 4$. 

We say a ring spectrum $R$ has height $n$ if $K(m)_*R=0$ for all $m>n$ and $K(n)_*R\ne 0$. Our result proves, in particular, that $K(BP\langle 2\rangle)$ has height $\le 3$. To prove that $K(BP\langle 2\rangle)$ has height exactly $3$, one needs to prove the nonvanishing result $K(3)_*K(BP\langle 2\rangle)\ne 0$. Since the first draft of this paper, Hahn--Wilson \cite{HW22} have shown that in fact 
$K(n+1)_*K(BP\langle n\rangle )\ne 0$
for any $p$-primary $E_3$-form of $BP\langle n\rangle$. In fact, Hahn--Wilson \cite{HW22} prove the stronger statement that $K(BP\langle n\rangle )$ has fp-type $n+1$ in the sense of \cite[p.5]{MR99}.

After work of Mitchell in 1990 \cite{Mit90} and work of Ausoni--Rognes in 2002  \cite{AR02}, there has been renewed interest and progress on questions of this nature. In particular, after the first draft of this paper was posted in pre-print form, work of Clausen--Mathew--Naumann--Noel \cite{CMNN24} and Land--Mathew--Meier--Tamme \cite{LMMT24} proved that if $R$ is an $E_1$-ring spectrum and $T(n-1)_*R=0$ and $T(n)_*R=0$, then $T(n)_*K(R)=0$, where $T(n)$ is the $v_n$-telescope $v_n^{-1}V$ of a type $n$ finite complex $V$.
We note that our Theorem \ref{main application} is proven using entirely different techniques to these other authors and a feature of our approach is that it applies to topological periodic cyclic homology. We therefore regard our methods as complementary to those of \cite{CMNN24,LMMT24}. 

Our work in this paper was motivated initially by a conjecture in an early draft of a paper by the first author and Quigley \cite{AKQ25}. Let $y(n)$ be the Thom spectrum of the $1$-fold loop map 
\[ \Omega J_{2^n-1}(S^2)\to \Omega J(S^2)\simeq \Omega^2S^3\to BGL_1S\]
where $\Omega^2S^3\to BGL_1S$ is the unique (up to homotopy) $2$-fold loop map, with target $BGL_1S$ is (a model for) the classifying space of stable spherical bundles, and $J(S^2)$ is the James construction \cite{Jam55} equipped with its usual filtration 
\[ J_{2^n-1}(S^2)\hookrightarrow J(S^2)\]
for all $n\ge 0$.
Here we write $TP(y(n))[k]$ for the Greenlees filtration of topological periodic cyclic homology of $y(n)$ (see  \cite{MR908451} for details about this filtration).  

In \cite{AKQ25}, the first author and Quigley conjectured that the pro-vanishing of Margolis homology of $H_{*}(TP(R)[k];\mathbb{F}_{p})$ should be sufficient to show that $K(n)_{*}TP(R)$ vanishes, in the case of $R=y(n)$. This conjecture is resolved by Theorem \ref{main thm} together with calculations of the first author and Quigley which verify that the remaining hypotheses of Theorem \ref{main thm} are satisfied. 
This is used by the first author and Quigley to prove that $L_{K(m)}y(n)=0$ implies $L_{K(m+1)}TP(y(n))=0$ for $0\le m<n$ and each $n\ge 0$ in \cite{AKQ25}. 

\subsection{Conventions}\label{conventions}
Our conventions are all relatively standard, but we prefer to state them explicitly to avoid any possible confusion.

When $\kappa$ is a cardinal number and $A$ an object in some category, we will write $A^{\kappa}$ for the $\kappa$-fold categorical product and $A^{\coprod\kappa}$ for the $\kappa$-fold categorical coproduct. If our category is additive and $\kappa$ is finite, then we write $A^{\oplus\kappa}$ for the $\kappa$-fold categorical biproduct of $A$ with itself.

Let $R$ be a commutative ring. 
In this paper, the term ``finite type'' is used in the sense common in algebraic topology: a graded $R$-module $V$ is {\em finite type} if, for each integer $n$, the grading degree $n$ summand $V^n$ is a finitely generated $R$-module. In particular, if $R$ is a field, then we say $V$ is finite type if, for each integer $n$, the grading degree $n$ summand $V^n$ is a finite dimensional vector space. 

We will write $P(x_1,\dots ,x_n)$ for a polynomial algebra over $\mathbb{F}_{p}$ with generators $x_1,\dots ,x_n$. We write $E(x_1,\dots ,x_n)$ for an exterior algebra over $\mathbb{F}_p$ with generators $x_1,\dots ,x_n$. We write $P_k(x_1,\dots,x_n)$ for the truncated polynomial algebra over $\mathbb{F}_p$ with generators $x_1,\dots ,x_n$ and relations $x_1^k, ... ,x_n^k$. Note that $P_k(x)$ is often also denoted $\mathbb{F}_p[x]/x^k$ in the literature.

Suppose we have a ring $R$, a non-zero-divisor $r\in R$, and a left $R$-module $M$.
\begin{itemize}
\item An element $m\in M$ is {\em $r$-power-torsion} if there exists an integer $n$ such that $r^nm = 0$.
\item The module $M$ is {\em $r$-power-torsion} if, for each $m\in M$, there exists some integer $n$ such that $r^nm = 0$.
\item An element $m\in M$ is {\em simple $r$-torsion} if $rm=0$.
\item The module $M$ is {\em simple $r$-torsion} if $rm = 0$ for all $m\in M$.
\end{itemize}

Given categories $\cC$ and $\cD$, we will write $\cD^{\cC}$ for the category of functors from $\cC$ to $\cD$. By a \emph{sequence} of objects in a category $\mathcal{C}$, we mean an object in $\mathcal{C}^{(\mathbb{Z}^{\op})}$, where $\mathbb{Z}$ is regarded as a partially-ordered set and hence as a small category. Having adopted this convention, whenever we write $\lim_{k}$, it always indicates a limit as $k\rightarrow\infty$. When we write $\colim_{k}$, it always indicates a limit as $k\rightarrow -\infty$.
We use the notation $\{X_i\}$ as shorthand for a sequence of objects $\dots \rightarrow X_1 \rightarrow X_0 \rightarrow X_{-1}\rightarrow \dots$ in $\mathcal{C}$. 

Let $p$ be an odd prime.  We write $A$ for the $p$-primary dual Steenrod algebra and $A_{*}$ for its dual. By Milnor \cite{MR0099653}, there is an isomorphism
\[ A_*\cong P( \bxi_i : i\ge 1 ) \otimes E(\btau_i: i \ge 0)\]
where $\bxi_i$ and $\btau_i$ are the conjugates of Milnor's generators $\xi_i$ and $\tau_i$. That is, if we write $\chi$ for the antipode map of the Hopf algebra $A_*$, then $\bxi_i=\chi(\xi_i)$ and $\btau_i=\chi(\tau_i)$. The coproduct is defined by the formulas
\begin{align} 
\label{xi coprod}\Delta( \bxi_k) = \sum_{i+j=k} \bxi_i \otimes \bxi_j^{p^i} &\text{ and }  \\
\label{tau coprod} \Delta( \btau_k) = 1\otimes \btau_k + \sum_{i+j=k} \btau_i\otimes \bxi_j^{p^i} &
\end{align}
where $\bxi_0=1$ by convention. Given $A_{*}$-comodules $M$ and $n$, we write  $\Hom_{A_{*}}(M,N)$ for the $\mathbb{F}_{p}$-vector space of $A_{*}$-comodule maps from $M$ to $N$ and $\Ext_{A_{*}}^{*.*}(\mathbb{F}_{p},N)$ the right derived functors of the functor $\Hom_{A_{*}}(\mathbb{F}_{p},-)$.

\subsection{Organization}
In Section \ref{Hm condition}, we give sufficient conditions for the canonical map $X\to X^{<N}$ to induce an injection on mod $p$ homology. 
In Section \ref{sec: acyclic limits}, we prove the main theorem.
In Section \ref{application}, we give our 
main application, which is a proof of a higher chromatic height analogue of Mitchell's theorem. 
We also provide an appendix 
(Appendix \ref{appendix on basics of margolis homology}), 
including results on Margolis homology.  

\subsection{Acknowledgements}
The first author would like to thank J.D. Quigley for discussions related to this project and Bj{\o}rn Dundas and Eric Peterson for expressing their interest in the project. The authors would also like to thank an anonymous referee as well as John Rognes and Dexter Chua for careful readings of the paper that lead to significant improvements. This project has received funding from the European Union's Horizon 2020 research and innovation programme under the Marie Sk\l{}odowska-Curie grant agreement No 1010342555.
\thinspace \includegraphics[scale=0.15]{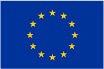} 

\section{Morava K-theory of homotopy limits}

Let $X$ be a spectrum, and let $N$ be an integer. Let $X^{<N}$ denote the spectrum obtained by attaching cells to $X$ to kill the homotopy groups in degrees greater than or equal to $N$. In \cref{Kn-acyclicity of products}, we will find ourselves needing an answer to the following natural question: when does the natural map
\[ X \to X^{<N} \]
induce a {\em one-to-one} map in mod $p$ homology? Consequently our first task, in Section \ref{Hm condition}, is to answer that question.

\subsection{When does killing homotopy induce an injection in homology?}\label{Hm condition}
Let $P(M)=\Hom_{A_*}(\mathbb{F}_p,M)$ denote the graded $\mathbb{F}_p$-vector space of comodule primitives in a graded left $A_*$-comodule $M$, and let $Q(Z)=\mathbb{F}_p\otimes_A Z$
denote the $A$-module indecomposables of $Z$.

\begin{definition}\label{def of condition H(m)}
Let $p$ be a prime number and let $N$ be an integer. We will say $X$ {\em satisfies condition $H(N)$} if $X$ is bounded below, 
and the $A_*$-comodule primitives of $H_*(X;\mathbb{F}_p)$ are trivial in grading degrees $\ge N$. 
\end{definition}
We suppress the prime $p$ from the notation $H(N)$, because it will always be clear from the context. 
\begin{prop}\label{injectivity on homology prop}
Let $p$ be a prime number, let $N$ be an integer, and let $X$ satisfy condition $H(N)$.
Let $X^{<N}$ be $X$ with cells attached to kill all the homotopy groups of $X$ in degrees $\geq N$. Then the map $H_*(X; \mathbb{F}_p) \rightarrow H_*(X^{<N};\mathbb{F}_p)$, induced by the canonical map $X\rightarrow X^{<N}$, is injective. 
\end{prop}

\begin{proof}
It is classical (e.g. see Proposition 3.9 of \cite{MR0174052}) that a map $M\to M^{\prime}$ of {\em coalgebras} is injective if and only if it is injective on the primitives. We need to show that the same is true for a map $M\to M^{\prime}$ of bounded-below {\em comodules} instead. Dually, we want to know that a map of graded $A$-modules
$f\colon \thinspace Z\to Z^{\prime} $
is surjective if $Z^{\prime}$ is bounded below and the induced map on indecomposables
$ \mathbb{F}_p\otimes_{A}f \colon \thinspace Q(Z)\to Q(Z^{\prime}) $
is surjective. 
This is elementary: every homogeneous element $x$ of $Z^{\prime}$ is an $A$-linear combination $\sum_{i=1}^n a_iq_i$ of homogeneous elements $q_i\in Q(Z^{\prime})$. Lift each $q_i$ to an element $\tilde{q}_i\in Q(Z)$, and observe that $f(\sum_{i=1}^na_i\tilde{q}_i) = x$.
\end{proof}

\begin{example}
A simple example where the hypotheses, and therefore the result, holds, is when $X=S^0$ and $N=1$, so that $\mathbb{F}_p\hookrightarrow (A//E(0))_*$ is an inclusion. 
A simple example where the hypotheses are {\em not} satisfied is the case where $X=S^0$ and $N=0$, where of course $X^{<N}$ is contractible and the result cannot hold. 
\end{example}

\subsection{Recollections}
Recall that, for each prime number $p$ and positive integer $n$, we have the homotopy fiber sequence
\[ \Sigma^{2(p^n-1)} k(n) \rightarrow k(n) \rightarrow H\mathbb{F}_p, \]
and the composite map
\begin{align}\label{Qn map of spectra} 
	H\mathbb{F}_p \rightarrow \Sigma^{2p^n-1}k(n) \rightarrow \Sigma^{2p^n-1}H\mathbb{F}_p
\end{align}
is the cohomology operation $Q_n$, which satisfies $Q_n^2 = 0$. 
This implies a useful relationship between Morava $K$-theory and Margolis homology of $E(Q_n)$-modules, which we briefly summarize in an appendix to this paper, Appendix \ref{appendix on basics of margolis homology}.

\begin{definition}
We say that an $E(Q_n)$-module is {\em $Q_n$-acyclic} if the $Q_n$-Margolis homology $H(M;Q_n)$ vanishes. We say that a morphism of $E(Q_n)$-modules is a {\em $Q_n$-equivalence} if it induces an isomorphism in $Q_n$-Margolis homology.
\end{definition}

We recall the following useful result of Adams \cite{MR1324104}.
\begin{theorem}[Theorem III.15.2 \cite{MR1324104}]\label{adams thm on holims}
Suppose that $R$ is a subring of $\mathbb{Q}$, $E$ is a bounded-below spectrum such that $H_r(E;R)$ is a finitely generated $R$-module for all $r$, and $\{ X_i\}_{i\in I}$ is a set of spectra such that $\pi_r(X_i)$ is an $R$-module for all $r$. Suppose that there exists a uniform lower bound for $\pi_*(X_i)$, i.e., there exists an integer $N$ such that $\pi_n(X_i) \cong 0$ for all $n<N$. Then the canonical map of spectra
\begin{align}\label{comparison map 3049} E\smash \prod_{i\in I} X_i \rightarrow \prod_{i\in I} \left( E\smash X_i\right) \end{align}
is a weak equivalence.
\end{theorem}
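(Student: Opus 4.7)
The plan is to reduce the theorem to the case where $E$ is a finite CW spectrum and then bootstrap back to the general case by exploiting the uniform lower bound on the $Y_i$ to commute a homotopy colimit past the infinite product. Using that $E$ is bounded below and that each $H_r(E;R)$ is a finitely generated $R$-module, I would first build a CW filtration $E^{(0)} \hookrightarrow E^{(1)} \hookrightarrow \dots$ of $R$-module spectra with $E \simeq \mathrm{hocolim}_n E^{(n)}$, each $E^{(n)}$ built from finitely many $R$-local cells, and the map $E^{(n)} \to E$ at least $(n-c)$-connected for some constant $c$ depending only on the bounded-below value of $E$. Standard obstruction theory over $R$ (which is a PID, being a subring of $\mathbb{Q}$) produces such a model from a minimal resolution of $H_*(E;R)$.

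Next, for each finite $E^{(n)}$, the comparison map
\[ E^{(n)} \wedge \prod_{i\in I} Y_i \longrightarrow \prod_{i \in I}\bigl(E^{(n)} \wedge Y_i\bigr) \]
is a weak equivalence, proved by induction on the number of cells: for a single shifted sphere it is the tautological identification $\Sigma^k \prod_i Y_i \simeq \prod_i \Sigma^k Y_i$, and each cell attachment is handled by the five-lemma applied to the long exact sequences of homotopy groups of the associated cofiber sequences, using that $\pi_*$ commutes with products of spectra. Setting $F_{n,i} := E^{(n)} \wedge Y_i$, smashing the colimit presentation of $E$ with $\prod_i Y_i$ and the finite-case equivalence together give
\[ E \wedge \prod_i Y_i \simeq \mathrm{hocolim}_n \prod_i F_{n,i}, \]
so the theorem reduces to the interchange $\mathrm{hocolim}_n \prod_i F_{n,i} \simeq \prod_i \mathrm{hocolim}_n F_{n,i}$.

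The main obstacle, and the heart of the proof, is this last interchange. Since each $Y_i$ is $(N-1)$-connective uniformly in $i$, and the cofiber of $E^{(n)} \to E^{(n+1)}$ is at worst $(n-c)$-connected, the cofiber of $F_{n,i} \to F_{n+1,i}$ is at worst $(n - c + N - 1)$-connected uniformly in $i$. Because products of spectra preserve connectivity (the homotopy groups of a product are the product of the homotopy groups), the tower $\{\prod_i F_{n,i}\}_n$ has, in each fixed degree $k$, transition maps that are eventually isomorphisms on $\pi_k$. The associated Milnor $\lim^1$ sequence then identifies $\pi_k(\mathrm{hocolim}_n \prod_i F_{n,i})$ with $\prod_i \pi_k(\mathrm{hocolim}_n F_{n,i}) = \prod_i \pi_k(E \wedge Y_i)$ in every degree, yielding the desired weak equivalence. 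The uniform lower bound on the $Y_i$ is exactly what is needed here: without it, the connectivity of the tower transitions would not be uniform in $i$, the $\lim^1$ term could fail to vanish, and the conclusion would break down; removing this uniform-lower-bound hypothesis under different structural assumptions is precisely the motivation for the main results of the present paper.
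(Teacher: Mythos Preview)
The paper does not give its own proof of this theorem; it is simply recalled as Theorem~III.15.2 of Adams' \emph{Stable Homotopy and Generalised Homology} and used as input. So there is no in-paper argument to compare against.

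Your outline is essentially Adams' original argument and is correct in its main steps: filter $E$ (after $R$-localizing if necessary, which is harmless since each $Y_i$ and hence $\prod_i Y_i$ is $R$-local) by finite subspectra whose inclusion into $E$ becomes arbitrarily highly connected, verify the comparison map for finite smashes by induction on cells, and then use the uniform connectivity of the $Y_i$ to interchange the sequential homotopy colimit with the infinite product. One small slip: in the final paragraph you invoke a ``Milnor $\lim^1$ sequence'' for the sequential homotopy \emph{colimit}. There is no $\lim^1$ term here; $\pi_k$ commutes with sequential homotopy colimits on the nose, by compactness of the sphere. The correct statement is simply that
\[
\pi_k\Bigl(\mathrm{hocolim}_n \prod_i F_{n,i}\Bigr) \;\cong\; \mathrm{colim}_n \prod_i \pi_k(F_{n,i}),
\]
and since the towers $\{\pi_k(F_{n,i})\}_n$ stabilize at a stage depending on $k$ but not on $i$ (this is exactly where the uniform lower bound is used), the colimit commutes with the product, giving $\prod_i \pi_k(E\smash Y_i)$. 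With that correction, the argument goes through.
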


The following corollary is a straightforward consequence of Theorem \ref{adams thm on holims}.
\begin{corollary}\label{adams cor}
Let $\{X_i\}$ be a sequence of spectra. Suppose that there exists a uniform lower bound on $\pi_*(X_i)$. Then the canonical maps of spectra
\begin{align*} H\mathbb{F}_p\smash \underset{i}{\holim}\thinspace X_i &\rightarrow \underset{i}{\holim}\thinspace \left( H\mathbb{F}_p\smash X_i\right) \mbox{\ \ \ and} \\
 k(n)\smash \underset{i}{\holim}\thinspace X_i &\rightarrow \underset{i}{\holim}\thinspace  \left( k(n) \smash X_i\right) \end{align*}
are weak equivalences.
\end{corollary}

\subsection{$K(n)$-acyclicity of products.}
\label{Kn-acyclicity of products}
\begin{definition}
Given a spectrum $X$, by the {\em Whitehead filtration of $X$} we mean the functor $\Wh(X): \mathbb{Z}^{\op} \rightarrow \text{Sp}$ given by letting $\Wh^{n}(X) = \Wh(X)(n)$ be the fiber $X^{\ge n}$ of the canonical map $X\to X^{< n}$. The map $\Wh^{n+1}(X)\rightarrow \Wh^{n}(X)$ is the natural map $X^{\geq n+1} \rightarrow X^{\geq n}$.
\end{definition}
One agreeable property of Whitehead filtrations is that they are compatible with products:
\begin{lemma}\label{whitehead filt and products}
Let $I$ be a set, and for each $i\in I$, let $X_i$ be a spectrum. Then the natural map $\Wh^{n}(\prod_{i\in I} X_i)\rightarrow\prod_{i\in I} \Wh^{n}(X_i)$ is a weak equivalence for each integer $n$. Moreover, these weak equivalences are compatible with the maps in the Whitehead filtrations, so that 
\[ \Wh\left(\prod_{i\in I}X_i\right) \rightarrow \prod_{i\in I} \Wh(X_i) \]
is a levelwise weak equivalence of sequences in $\SHC$.
\end{lemma}
\begin{proof}
Routine consequence of $\pi_*$ commuting with arbitrary products.
\end{proof}

\begin{lemma}\label{ss existence lemma}
Let $p$ be a prime number, let $n$ be a positive integer, let $I$ be a set, and for each $i\in I$, let $X_i$ be a spectrum. We have a strongly convergent spectral sequence
\begin{align}
\label{ss 1000000} E^1_{s,t} \cong \prod_{i\in I} k(n)_t\left( \Sigma^s H\pi_s(X_i)\right) &\Rightarrow k(n)_t\left( \prod_{i\in I} X_i\right) \\
\label{ss 1000001} d^r: E^r_{s,t} &\rightarrow E^r_{s+r,t-1}
\end{align}
given by the exact couple arising from applying connective Morava $K$-theory $k(n)$ to the Whitehead tower of $\prod_{i\in I} X_i$.
The action of $v_n\in k(n)_{2(p^n-1)}$ on the abutment increases filtration by one, so that its action in the spectral sequence is zero. 
\end{lemma}
\begin{proof}
Throughout, we use the terminology and methods of Boardman's paper \cite{MR1718076}.
Consider the spectral sequence of the unrolled exact couple obtained by applying $k(n)_*$ to the Whitehead filtration of $\prod_{i\in I} X_i$.
For each integer $t$, the $k(n)$-homology group $k(n)_t\left( \Wh^{s}\left(\prod_{i\in I} X_i\right)\right)$ vanishes for all $s>t$, by the Hurewicz theorem. Consequently 
\[ \lim_s k(n)_t\left( \Wh^{s}\left(\prod_{i\in I} X_i\right) \right) \text{ and } R^1\lim_sk(n)_t\left( \Wh^{s}\left(\prod_{i\in I} X_i\right) \right) \]
each vanish for all $t$. Consequently the spectral sequence converges conditionally to the colimit.  

The description of the $E^1$-page given in \eqref{ss 1000000} follows from Lemma \ref{whitehead filt and products}. The spectral sequence's $E^1$-page vanishes in bidegrees $(s,t)$ with $t<s$, so by \eqref{ss 1000001}, it is a half-plane\footnote{That is, the spectral sequence is half-plane after a suitable linear re-indexing of bidegrees to move the line $t-s=0$ to a coordinate axis. This re-indexing does not affect the fact that the spectral sequence has exiting differentials, in Boardman's sense.} spectral sequence with exiting differentials, convergent to the colimit. By Theorem 6.1 of \cite{MR1718076}, the spectral sequence must then be strongly convergent.

The spectral sequence converges to the colimit, which is 
\begin{align}
\nonumber \colim_s k(n)_*\left( \Wh^{s}\left( \prod_{i\in I} X_i\right)\right) 
\nonumber  &\cong k(n)_*\left( \hocolim_s \left( \Wh^{s}\left( \prod_{i\in I} X_i\right)\right)\right) \\
\nonumber  &\cong k(n)_*\left( \prod_{i\in I} X_i\right) .
\end{align}
Hence the spectral sequence converges to the $k(n)$-homology of the product $\prod_{i\in I} X_i$, as claimed.
\end{proof}

\begin{lemma}\label{kn injectivity lemma}
Let $p$ be a prime and let $n$ be a positive integer. Write $k(n)$ for the $n$th $p$-primary connective Morava $K$-theory spectrum. Let $M$ be an integer, and let $X$ be a bounded-below spectrum satisfying condition $H(M)$. Suppose that $k(n)_*(X)$ is simple $v_n$-torsion.
Then the canonical map $k(n)_*(X) \rightarrow k(n)_*(X^{<M})$ is injective.
\end{lemma}
\begin{proof}
Since $X$ is bounded-below, the smash product $k(n)\smash X$ is also bounded-below, and moreover has $p$-adically complete homotopy groups. Consequently Bousfield's convergence results in \cite{MR551009} yield that the $H\mathbb{F}_p$-nilpotent completion map $k(n)\smash X\rightarrow \left( k(n)\smash X\right)^{\wedge}_{H\mathbb{F}_p}$ is an equivalence.
Hence the Adams spectral sequence
\begin{align} \label{adams ss 00112}\Ext_{Comod(E(Q_n)^*)}^{s,t}\left(\mathbb{F}_p,H_*(X; \mathbb{F}_p)\right)
 &\Rightarrow \pi_{t-s}\left( \left( k(n)\smash X\right)^{\wedge}_{H\mathbb{F}_p}\right)
\end{align}
converges to the $k(n)$-homology groups of $X$. Since $k(n)_*(X)$ is simple $v_n$-torsion, the $E_{\infty}$-term of \eqref{adams ss 00112} is concentrated on the $s=0$-line. Consequently the Hurewicz map $k(n)_*(X) \rightarrow H_*(X;\mathbb{F}_p)$ is injective. We have a commutative square
\[\xymatrix{
 k(n)_*X \ar[r] \ar[d] & k(n)_*(X^{<M}) \ar[d] \\
 H_*(X; \mathbb{F}_p)\ar[r] & H_*(X^{<M}; \mathbb{F}_p)
}\]
whose left-hand vertical map is injective. The square's bottom horizontal map is injective since $X$ satisfies condition $H(M)$. Consequently the top vertical map is also injective, as claimed.
\end{proof}
 
\begin{theorem}\label{main thm prod case}
Let $p$ be a prime number, let $M$ be an integer, let $n$ be a positive integer, let $I$ be a set, and for each $i\in I$, let $X_i$ be a bounded-below\footnote{We emphasize that we are not assuming a {\em uniform} lower bound on the spectra $X_i$.} spectrum satisfying condition $H(M)$. Suppose that $k(n)_*(X_i)$ is simple $v_n$-torsion for all $i\in I$. Finally, suppose that the $\mathbb{F}_p$-vector space $\prod_{i\in I}k(n)_t(H\pi_sX_i)$ is finite-dimensional for each $t$ and for each $s\geq M$.
Then the product $\prod_i X_i$ is $K(n)$-acyclic.
\end{theorem}
\begin{proof}
Since each $X_i$ satisfies condition $H(M)$, for each $i\in I$ the map $\Wh^{M}(X_i) \rightarrow X_i$ is zero in $k(n)$-homology, by Lemma \ref{kn injectivity lemma}. Consequently the map
\begin{align*}
\prod_{i\in I} k(n)_*\left(\Wh^{M}(X_i)\right) 
 &\rightarrow k(n)_*\left( \prod_{i\in I} \left( \Wh^{q}(X_i) \right)\right),
\end{align*}
defined for all $q\leq M$, 
is trivial on the factor $k(n)_*(\Wh^{m}(X_i))$ corresponding to any element $i\in I$ such that $X_i$ is $q$-connective.
 For each integer $s$, the image of the map 
 \begin{equation}\label{map 3340495064d} 
 k(n)_t\left(\prod_{i\in I}\left(\Wh^{s}(X_i) \right)\right) \rightarrow \colim_q k(n)_t\left( \prod_{i\in I} \left (\Wh^{q}(X_i)\right)\right)
 \end{equation}
  is the $s$th filtration layer in the abutment of the spectral sequence \eqref{ss 1000000}.
If $s>M$, then the map \eqref{map 3340495064d} factors through the map 
\begin{equation}\label{map 334049506099} 
\prod_{i\in I}k(n)_t\left( \Wh^{M}(X_i)\right)\rightarrow\colim_{q}k(n)_t\left(\prod_{i\in I} \Wh^{q}(X_i)\right),
\end{equation} so we aim to show that the map \eqref{map 334049506099} is trivial, yielding $E^{\infty}_{s,t}=0$ for all $s>M$. 

A homogeneous element of $\prod_{i\in I} k(n)_*\left(\Wh^M(X_i)\right)$ of degree $t$ is given by specifying, for each $i\in I$, an element $x_i\in k(n)_t\left(\Wh^M(X_i)\right)$. In the sequence of abelian groups 
\begin{equation}\label{map 3340495063} \prod_{i\in I}k(n)_t\left( \Wh^{M}(X_i)\right)
 \rightarrow \prod_{i\in I}k(n)_t\left( \Wh^{M-1}(X_i)\right)
 \rightarrow \prod_{i\in I}k(n)_t\left( \Wh^{M-2}(X_i)\right)
 \rightarrow \dots ,\end{equation}
$x_j\in k(n)_t(\Wh^{M}(X_j))$ is trivialized as soon as we reach $\prod_{i\in I}k(n)_t\left( \Wh^{u}(X_i)\right)$, where $u$ is the connectivity of $X_j$. 

Consequently, for each element $x$ of $\prod_{i\in I}k(n)_*\left( \Wh^{M}(X_i)\right)$, the image of $x$ in the colimit $\colim_{s}\prod_{i\in I}k(n)_t\left( \Wh^{s}(X_i)\right)$ is zero upon projection to the quotient 
\[\colim_{s}\prod_{i\in I_u}k(n)_t\left( \Wh^{s}(X_i) \right),\] 
where $I_u$ is the subset of $I$ consisting of those $i\in I$ such that the spectrum $X_i$ has connectivity precisely equal to $u$.
For each integer $u$, we have a spectral sequence
\begin{align}
\label{ss 1000006} {}^uE^1_{s,t} \cong \prod_{i\in I_u} k(n)_t\left( \Sigma^s H\pi_s(X_i)\right) &\Rightarrow k(n)_t\left( \prod_{i\in I_u} X_i\right) \\
\nonumber d^r: {}^uE^r_{s,t} &\rightarrow {}^uE^r_{s+r,t-1}
\end{align}
obtained by applying $k(n)_*$ to the product of the Whitehead filtrations of the spectra $X_i$ with $i\in I_u$. In spectral sequence \eqref{ss 1000006},
the bidegrees ${}^uE^1_{s,t}$ with $s<u$ are trivial. Consequently there is an upper bound on the lengths of nonzero differentials in spectral sequence \eqref{ss 1000006} which can hit bidegree ${}^uE^r_{s,t}$. 

The unrolled exact couple which yielded the original spectral sequence \eqref{ss 1000000} is the product, over all integers $u$, of the unrolled exact couple which yields the spectral sequence ${}^uE^*_{*,*}$ of \eqref{ss 1000006}. Since each given element $x$ of 
$\prod_{i\in I}k(n)_*\left( \Wh^{M}(X_i)\right)$
maps to zero in 
$\colim_{s}\prod_{i\in I_u}k(n)_t\left( \Wh^{s}(X_i)\right),$
the image of $x$ in $\prod_{i\in I_u}k(n)_*\left( \Wh^{M}(X_i)\right)$ must represent a class in the spectral sequence ${}^uE^*_{*,*}$ of \eqref{ss 1000006} which is hit by a differential. There is an upper bound, depending on $u$, on the length of that differential; this is a consequence of a lower vanishing line in the ${}^uE^1$-page of \eqref{ss 1000006}, simply due to the $u$-connectivity of the spectra whose $k(n)$-homology yields that ${}^uE^1$-page. Consequently, if we regard $x\in \prod_{i\in I}k(n)_*\left( X_i^{\geq M}\right)$ as an $I$-tuple $(x_i)_{i\in I}$, there is some finite page of spectral sequence ${}^uE^*_{*,*}$ by which all the components $x_i$ with $i\in I_u$ have been wiped out by differentials.

Hence, if $s\geq M$, the only chance for $x$ to represent a nonzero element in the abutment $\colim_{q}\prod_{i\in I}k(n)_t\left( X_i^{\geq q}\right)$ of spectral sequence \eqref{ss 1000000} is if $x$ represents an element in $E^1_{*,*}$ in a bidegree which is hit by arbitrarily long differentials (cf. Remark \ref{rem: explanation}). But whatever element of $E^1_{*,*}$ is represented by $x$, that element sits above the $s=M$-line, and consequently lives in a bidegree $E^1_{s,t}\cong \prod_{i\in I} k(n)_t\left( \Sigma^s H\pi_s(X_i)\right)$ which is, by assumption, finitely generated. Consequently only finitely many lengths of differentials can have nonzero image in that bidegree. Hence $x$ indeed must map to zero in the abutment of \eqref{ss 1000000}.
 
This yields the desired triviality of $E^{\infty}_{s,t}$ in \eqref{ss 1000000} for all $s>M$. Drawn with the Adams convention, its $E^1$-page also vanishes above the line $s=t$. In the following diagrams, the vanishing regions are colored green:
\begin{figure}[H]
\centering
\begin{subfigure}{.45\textwidth}
\centering
\begin{tikzpicture}[scale=1.7]
\clip (-0.2,-0.2) rectangle (1.6,1.6);
\fill[green!50!white] (-0.2,-0.2) -- (1.6,1.6) -- (-0.2,1.6) -- cycle;
\draw (-0.1,1.4) node{$s$};
\draw (1.4,-0.1) node{$t$};
\draw (-1.5,0) -- (1.8,0);
\draw (0,-1.5) -- (0,1.8);
\end{tikzpicture}
\caption{Vanishing region at $E^1$.}
\end{subfigure}
\begin{subfigure}{.45\textwidth}
\centering
\begin{tikzpicture}[scale=1.7]
\clip (-0.2,-0.2) rectangle (1.6,1.6);
\fill[green!50!white] (-0.2,-0.2) -- (1.6,1.6) -- (-0.2,1.6) -- cycle;
\fill[green!50!white] (-0.2,1.0) -- (1.6,1.0) -- (1.6,1.6) -- (-0.2,1.6) -- cycle;
\draw (-0.1,1.4) node{$s$};
\draw (1.4,-0.1) node{$t$};
\draw (-1.5,0) -- (1.8,0);
\draw (0,-1.5) -- (0,1.8);
\end{tikzpicture}
\caption{Vanishing region at $E^{\infty}$.}
\end{subfigure}
\end{figure}
The action of $v_n$ on the spectral sequence increases stem (i.e., position along the horizontal axis) by $2(p^n-1)$, and increases filtration (i.e., position along the vertical axis) by at least one in the $E^{\infty}$-page of spectral sequence \eqref{ss 1000000}, by the claim about the $v_n$-action in Lemma \ref{ss existence lemma}. Of course, there is also the possibility of longer filtration jumps, so that the action of $v_n$ on the abutment $k(n)_*\left(\prod_{i\in I} X_i\right)$ raises filtration by more than one. Nevertheless, the vanishing line in the $E^{\infty}$-page establishes that, starting in any given bidegree, there is an upper bound on the integers $j$ such that $v_n^j$ times elements in that bidegree can be nonzero in $k(n)_*\left(\prod_{i\in I} X_i\right)$, even taking into account possible filtration jumps. Consequently every homogeneous element of $k(n)_*\left(\prod_{i\in I} X_i\right)$ is $v_n$-power-torsion. Hence $v_n^{-1}k(n)_*\left(\prod_{i\in I} X_i\right) \cong K(n)_*\left(\prod_{i\in I} X_i\right)$ is trivial, as claimed.
\end{proof}

\begin{remark}\label{rem: explanation}
In the sequence 
\begin{equation}\label{seq 23004} \dots \rightarrow \prod_{i\in I}k(n)_t(\Wh^{M}(X_i)) \rightarrow  \prod_{i\in I}k(n)_t(\Wh^{M-1}(X_i)) \rightarrow\dots , 
\end{equation}
given an $I$-tuple $(x_i)_{i\in I}\in \prod_{i\in I}k(n)_t(\Wh^{M}(X_i))$, we know that all the components $x_i$ with $i\in I_u$ map to zero at the stage $\prod_{i\in I}k(n)_t(\Wh^{q}(X_i))$, so every individual $x_i$ eventually maps to zero. However, we must be careful about the possibility that $(x_i)_{i\in I}$ still does not map to zero in the colimit of \eqref{seq 23004}. This is the reason why we need the assumption that the $\mathbb{F}_p$-vector space $\prod_{i\in I}k(n)_t(H\pi_sX_i)$ is finite-dimensional for each $t$ and for each $s\geq M$. For a simple example of what could go wrong, consider the image of the element $(1,1,\dots)\in \prod_{n\geq 0} \mathbb{Z}$ in the colimit of the sequence of abelian groups
\[ \prod_{n\geq 1} \mathbb{Z} \stackrel{f_1}{\longrightarrow} \prod_{n\geq 1} \mathbb{Z}\stackrel{f_2}{\longrightarrow} \dots\]
in which $f_n$ sends the first $n$ factors of $\prod_{n\geq 1} \mathbb{Z}$ to zero, and is the identity on the remaining components.

\end{remark}

\subsection{$K(n)$-acyclicity of sequential homotopy limits.}\label{sec: acyclic limits}

\begin{lemma}
Let $\{X_i\}$ be a sequence in the stable homotopy category. Then we have a commutative diagram of spectra
\begin{equation}\label{comm diag 7732}\xymatrix{
 \vdots \ar[d] &
  \vdots \ar[d] &
  \vdots \ar[d] \\
\Wh^{q+1} \left( \holim_i X_i\right) \ar[r] \ar[d] &
  \holim_i \left( \Wh^{q+1}(X_i)\right) \ar[r] \ar[d] &
  \Sigma^{q} H\left(R^1\lim_i \pi_{q+1}X_i\right) \ar[d] \\
 \Wh^{q}\left( \holim_i X_i\right) \ar[r] \ar[d] &
  \holim_i \left( \Wh^{q}(X_i)\right) \ar[r] \ar[d] &
  \Sigma^{q-1} H\left(R^1\lim_i \pi_qX_i\right) \ar[d] \\
 \Wh^{q-1}\left( \holim_i X_i\right) \ar[r] \ar[d] &
  \holim_i \left( \Wh^{q-1}(X_i)\right) \ar[r] \ar[d] &
  \Sigma^{q-2} H\left(R^1\lim_i \pi_{q-1}X_i\right) \ar[d] \\
 \vdots &
  \vdots &
  \vdots 
}\end{equation}
whose rows are each homotopy fiber sequences.
\end{lemma}
\begin{proof}
To get that each row in \eqref{comm diag 7732} is a fiber sequence, compare the Milnor sequence for the homotopy groups of $\holim_iX_i$ to the Milnor sequence for the homotopy groups of $\holim_i \Wh^{q}(X_i)$. 
The compatibility with the structure maps of the sequences of spectra follows from functoriality of $\Wh^q$ and the universality of the comparison map from $\Wh^q\circ \holim_i$ to $\holim_i\circ \Wh^q$.
\end{proof}

\begin{lemma}\label{holim hocolim swap}
Let $\{X_{i}\}$ be a sequence of morphisms of spectra. Then the natural map 
\[ \underset{i}{\holim} X_i \rightarrow \underset{q}{\hocolim}\ \underset{i}{\holim} \left( \Wh^{q} (X_i) \right)\]
is an equivalence.
\end{lemma}
\begin{proof}
Take the homotopy colimit of each column in \eqref{comm diag 7732}.
\end{proof}

\begin{prop}\label{lim ss prop}
Let $\{X_{i}\}$ be a sequence of morphisms of spectra. Suppose that $R^1\lim_i \pi_*(X_i)$ vanishes. Then there exists a conditionally convergent spectral sequence
\begin{align}
\label{ss f43209} E^1_{s,t} \cong k(n)_t\left( \Sigma^s H\lim_i\pi_sX_i\right)   &\Rightarrow k(n)_t\holim_i X_i \\
\nonumber d_r: E^r_{s,t} &\rightarrow E^r_{s+r,t-1}.
\end{align}
\end{prop}
\begin{proof}
For each $i\in\mathbb{N}$, we have the Whitehead filtration $\Wh(X_i)$, a sequence of spectra. Consider the homotopy limit $\holim_i \Wh(X_i)$ of the Whitehead filtrations, and regard $\holim_i \Wh(X_i)$ as a tower of spectra. Applying $k(n)_*$ to that tower yields an exact couple. The claimed spectral sequence \eqref{ss f43209} is the spectral sequence of that exact couple.

By a connectivity argument exactly like the one used in the proof of Lemma \ref{ss existence lemma}, for each fixed integer $t$ the group $k(n)_t\left( \holim_i\Wh^s(X_i)\right)$ is trivial for sufficiently large $s$, so the spectral sequence converges conditionally to the colimit. 

A priori, that colimit is $\colim_{q} k(n)_t\left( \holim_i\Wh^q(X_i)\right)$. We must show that this colimit agrees with the claimed abutment $k(n)_t\holim_i X_i$. This agreement follows from the isomorphisms:
\begin{align}
\nonumber \colim_{q} k(n)_t\left( \holim_i \Wh^{q}(X_i)\right)
  &\cong k(n)_t\left( \hocolim_{q} \holim_i \left( \Wh^{q}(X_i) \right )\right ) \\
\label{iso m21409}  &\cong k(n)_t(\holim_i X_i)
\end{align}
where isomorphism \eqref{iso m21409} is due to Lemma \ref{holim hocolim swap}. 

We also must show that the spectral sequence's input is as claimed. We have the fiber sequence
\begin{equation}\label{fib seq 34523} \holim_i (\Wh^{s+1}(X_i))\rightarrow \holim_i (\Wh^{s}(X_i))\rightarrow \holim_i \Sigma^{s}H\pi_{s}X_i,\end{equation}
and using the Milnor exact sequence, the right-hand term in \eqref{fib seq 34523} is easily seen to be a two-stage Postnikov system with the same homotopy groups as
\begin{equation}\label{wedge 3409} \Sigma^{s} H\lim_i \pi_{s}X_i \vee  \Sigma^{s-1}HR^1\lim_i \pi_{s}X_i.\end{equation}
The vanishing assumption on $R^1\lim{}_i\pi_*(X_i)$ now ensures that the $E^1$-term is as claimed.
\end{proof}

The following useful lemma is proven by Lun{\o}e-Nielsen and Rognes, as part of the proof of Proposition 2.2 in their paper \cite{MR3007679}:
\begin{lemma}\label{lnr lemma}
Suppose that $\{X_{i}\}$ is a sequence in the stable homotopy category. Suppose that each $X_i$ is a bounded below spectrum and each graded $\mathbb{F}_p$-module $H_*(X_i;\mathbb{F}_p)$ is finite type. Then
$ R^1\lim_i\pi_*((X_i)_p^{\wedge})=0.$
\end{lemma}

Given a prime number $p$, recall that an abelian group is said to be {\em $p$-reduced} if it has no nonzero infinitely $p$-divisible elements.
\begin{lemma}\label{finiteness lemma 1}
Let $p$ be a prime number, let $L$ be an integer, and let $\{X_{i}\}$ be a sequence of $p$-complete bounded-below spectra. Make the following assumptions:
\begin{enumerate}
\item The derived limit $R^1\lim_i{} \pi_s\left(X_i\right)$ is trivial for all integers $s$.
\item The mod $p$ reduction of the abelian group $\pi_s(\holim_i X_i)$ is finite for all $s\geq L$. x
\end{enumerate}
Then the abelian group $\lim_i k(n)_t(H\pi_sX_i)$ is finite for all integers $s\geq L$ and all integers $t$.
\end{lemma}
\begin{proof}
Let $\tilde{k}(n)$ be any spectrum equipped with a map $\tilde{k}(n)\rightarrow k(n)$ such that
 $\pi_*(\tilde{k}(n)) = \mathbb{Z}_{(p)}[v_n]$ as a graded abelian group,
and the map $\pi_*(\tilde{k}(n))\rightarrow \pi_*(k(n))$ is the reduction modulo $p$ map.
For example, $\tilde{k}(n)$ can be the spectrum obtained from $BP$ by using $BP$-module cells to cone off everything in the ideal in $BP_*$ generated by the elements $v_i$ with $i\neq n$.

By the first hypothesis and by the Milnor sequence for the homotopy groups of a sequential homotopy limit, we have isomorphisms
\begin{align*}
 k(n)_t(\holim_i H\pi_sX_i) 
  &\cong k(n)_t(H\lim_i \pi_sX_i) \\
  &\cong k(n)_t(H\pi_s\holim_i X_i) \\
  &\cong \tilde{k}(n)_t\left( S/p \smash H\pi_s\holim_i X_i \right).\end{align*}
The spectrum $S/p \smash H\pi_s\holim_i X_i$ is a two-stage Postnikov system, with
\begin{align*}
 \pi_j\left( S/p \smash H\pi_s\holim_i X_i\right)
  &\cong \left\{ \begin{array}{ll} 
   \mathbb{F}_p\otimes_{\mathbb{Z}} \pi_s\holim_i X_i &\mbox{\ if\ } j=0 \\
   \left(\pi_s\holim_i X_i\right)[p] &\mbox{\ if\ } j=1 \\
   0 &\mbox{\ if\ } j\neq 0,1 ,\end{array}\right.
\end{align*}
where the notation $G[p]$ denotes the $p$-torsion subgroup of an abelian group $G$.

Since the mod $p$ reduction of $\pi_s\holim_i X_i$ was assumed finite, we know that $\pi_0\left( S/p \smash H\pi_s\holim_i X_i\right)$ is finite. Since $\pi_s\holim_i X_i$ is $p$-reduced\footnote{The $p$-reducedness of $\pi_s\holim_i X_i\cong \lim_i\pi_sX_i$ follows from the fact that $p$-reducedness is a property preserved by sequential limits, and the fact that each $X_i$ is $p$-complete, hence has $p$-reduced homotopy groups. See Proposition 2.5 of \cite{MR551009} and VI.3.4(i) of \cite{MR0365573} for this last implication.}, the group $\pi_1\left( S/p \smash H\pi_s\holim_i X_i\right)$ is also finite. Hence $k(n)_t(\holim_i H\pi_sX_i)$ is $\tilde{k}(n)_t$ of a finite Postnikov system whose layers are each Eilenberg-Mac Lane spectra of finite abelian groups. Consequently $k(n)_t(\holim_i H\pi_sX_i)$
 is finite.

Theorem \ref{adams thm on holims} now implies that $\lim_i k(n)_t(H\pi_sX_i)$ is a quotient of the finite group $k(n)_t(\holim_i H\pi_sX_i)$. Hence $\lim_i k(n)_t(H\pi_sX_i)$ is finite.
\end{proof}

\begin{theorem}\label{thm: limit theorem}
Let $p$ be a prime number, let $M$ be an integer, and let $n$ be a positive integer. Suppose we have a sequence $\{X_{i}\}$ of bounded-below $p$-complete spectra  satisfying the following conditions:
\begin{itemize} 
\item $R^1\lim_i\pi_*X_i$ is trivial.
\item There exists an integer $L$ such that the mod $p$ reduction of the abelian group $\pi_s(\holim_i X_i)$ is finite for all $s\geq L$. 
\item For each integer $i$, the spectrum $X_i$ satisfies condition $H(M)$.
\item $k(n)_*(X_i)$ is simple $v_n$-torsion for all $i\in\mathbb{N}$.
\item For each $i$, the graded $\mathbb{F}_p$-module $H_*(X_i;\mathbb{F}_p)$ is finite type. 
\end{itemize}
Then the homotopy limit $\holim_i X_i$ is $K(n)$-acyclic.
\end{theorem}
\begin{proof} 
Same argument as in Theorem \ref{main thm prod case}, using spectral sequence \eqref{ss f43209} in place of spectral sequence \eqref{ss 1000000}.
There is one point worth commenting on: in the proof of Theorem \ref{main thm prod case}, we invoked the finiteness of the product $\prod_{i\in I} k(n)_t\left( \Sigma^s H\pi_s(X_i)\right)$ to bound the lengths of nonzero differentials hitting a given bidegree. In the present setting, we instead need to know that $\lim_i k(n)_t\left( \Sigma^s H\pi_s(X_i)\right)$ is finite. This is a consequence of Lemmas \ref{lnr lemma} and \ref{finiteness lemma 1}. We remark that this argument requires the assumption that each $X_i$ is $p$-complete (because of the $p$-completion appearing in Lemma \ref{lnr lemma}), which we did not need for Theorem \ref{main thm prod case}.
\end{proof}

\begin{remark}
Let $\{X_{i}\}$ be a sequence of spectra satisfying the hypotheses of Theorem \ref{thm: limit theorem}. Then since  $k(n)_*(X_i)$ is simple $v_n$-torsion, it is clear that $K(n)_{*}(X_{i})=0$ for each $i$, so $\lim_{i}K(n)_{*}X_{i}=0$. It is therefore a consequence of Theorem \ref{thm: limit theorem} that 
\[ \lim_{i}K(n)_{*}X_{i}=K(n)_{*}(\lim_{i}X_{i}),\]
i.e., under these hypotheses, $K(n)$-homology commutes with a {\em non-uniformly-bounded-below} homotopy limit.
\end{remark}
Fix a prime $p$ as well as integers $M$ and $n\ge 1$ for the remainder of this section. 
\begin{definition}\label{def of Km-amenability}
Let $\{X_{i}\}$ be a sequence in the stable homotopy category. We say that the sequence $\{X_{i}\}$ is {\em $K(n)$-amenable with parameter $M$} if the following conditions are all satisfied:
\begin{enumerate}
\item \label{amenable 1} each $X_i$ is bounded below and $p$-complete,
\item \label{amenable 2} $H_*(X_i;\mathbb{F}_p)$ is finite type for all $i$, 
\item \label{amenable 3} the sequence of graded $\mathbb{F}_p$-vector spaces $\{H(H_*(X_i ; \mathbb{F}_p),Q_{n})\}$ is pro-isomorphic to zero, 
\item \label{amenable 4} there exists an integer $L$ such that for each $s\ge L$ the groups $\pi_{s}(\lim_{i}X_{i})/p$ are finite, and
\item \label{amenable 5} for each $i$, the spectrum $X_i$ satisfies condition $H(M)$. 
\end{enumerate}
If the parameter $M$ is clear from context, then we simply say that the sequence $\{X_{i}\}$ is {\em $K(n)$-amenable}. 
\end{definition}

\begin{remark}\label{pro-mono}
Recall that a map $V\to W$ of pro-objects in graded $\mathbb{F}_p$-modules is a monomorphism if its kernel is pro-isomorphic to the zero sequence. A sequence $\{Z_{i}\}$ of abelian groups is pro-isomorphic to zero if for each $s$ there exists a $t\ge s$ such that $Z_{t}\to Z_{s}$ is the zero map. Hence a map of sequences $\{ V_i\}\to \{W_i\}$ is a pro-monomorphism if for each $s$ there exists a $t\ge s$ such that \[\ker (V_t \rightarrow W_t)\to \ker (V_s \rightarrow W_s)\] is the zero map. 
\end{remark}
\begin{lemma}\label{lem: pro-mono}
If $\{X_i\}$ is a $K(n)$-amenable sequence with parameter $M$, then the canonical map
\[ \{ k(n)_*(X_i)\}\to \{k(n)_*(X_i^{<M})\}\]
is a monomorphism in the category of pro-objects in graded $\mathbb{F}_p$-modules. 
\end{lemma}

\begin{proof}
The naturality of the edge homomorphism in the Adams spectral sequence for $k(n)\wedge X_{i}$ provides a morphism of pro-objects in the category of graded $\mathbb{F}_p$-vector spaces
\begin{align}\label{map 309599} \{ k(n)_*(X_i)\} &\to \{ \mathrm{hom}_{E(\tau_n)}(\mathbb{F}_p,H_*(X_i ;\mathbb{F}_p))\}.\end{align}
Each element of the kernel of the edge map 
$k(n)_*(X_i) \to \mathrm{Hom}_{E(\tau_n)}(\mathbb{F}_p,H_*(X_i ;\mathbb{F}_p))$,
can be represented by an element in the $E_2$-page of the Adams spectral sequence converging to $k(n)_*(X_i)$. That element is in $\Ext_{E(\tau_n)}^{s,*}(\mathbb{F}_p,H_*(X_i;\mathbb{F}_p))$ for some $s>0$. Hence, by Corollary \ref{input of Adams for k(n) smash X}, the pro-triviality of $\{ H(H_*(X_i;\mathbb{F}_p),Q_n)\}$ implies that the map \eqref{map 309599} of pro-vector-spaces is a monomorphism.

Since there is a natural canonical inclusion 
\[ \textup{inc} \colon \thinspace \Hom_{E(\tau_{n)}}(\mathbb{F}_{p},H_{*}(X_{i};\mathbb{F}_{p}))\subset H_*(X_i;\mathbb{F}_p)\]
we have a natural monomorphism of pro-objects in the category of graded $\mathbb{F}_p$-vector spaces
\[ 
    \{k(n)_*(X_i)\} \to \{ H_*(X_i;\mathbb{F}_p)\}.
\]
By naturality, the diagram 
\begin{equation}\label{diag m4509m}
	\xymatrix{
	 k(n)_{*}(X_{i})\ar[r] \ar[d] & \Hom_{E(\tau_{n})}(\mathbb{F}_{p},  H_*(X_i;\mathbb{F}_p))\ar[r]^(.6){\textup{inc}} \ar[d] & H_*(X_i;\mathbb{F}_p) \ar[d] \\ 
	  k(n)_{*}(X_{i}^{<M})\ar[r] & \Hom_{E(\tau_{n})}(\mathbb{F}_{p},  H_*(X_i^{<M};\mathbb{F}_p)) \ar[r]^(.6){\textup{inc}} & H_*(X_i^{<M};\mathbb{F}_p)  
	}
\end{equation}
commutes. We have shown that the composite of the top horizontal maps in diagram \eqref{diag m4509m} is a pro-monomorphism. The right-hand vertical map in \eqref{diag m4509m} is a levelwise monomorphism, since each spectrum $X_i$ satisfies condition $H(M)$. Hence the left-hand vertical map in \eqref{diag m4509m} must also be a pro-monomorphism, as claimed.
\end{proof}
Finally, we prove the main theorem. 
\begin{theorem}\label{thm on holim of margolis-acyclics with lim1 vanishing}
Suppose $\{X_{i}\}$ is a $K(n)$-amenable sequence with parameter $M$. 
Then $\holim_{i}X_{i}$ is $K(n)$-acyclic. 
\end{theorem} 
\begin{proof}
Essentially the same argument as that of Theorem \ref{main thm prod case}, although some care is required to establish the horizontal vanishing line in spectral sequence \eqref{ss f43209}. Our argument for that vanishing line is as follows.
Recall (e.g. from Scholie 3.5 in the appendix of \cite{MR0245577}) that a map of sequences of abelian groups $\{ A_i \} \rightarrow \{ B_i\}$ is pro-zero if and only if, for every integer $i$, there exists an integer $j(i)\geq i$ such that the composite map of abelian groups $A_{j(i)}\rightarrow A_i\rightarrow B_i$ is zero.

For each integer $k$, let $\gamma(k)$ be the minimum of the connectivities of the spectra $X_0, \dots ,X_k$, so that $Wh^{\gamma(k)}(X_i) \cong X_i$ for all $i\leq k$. By Lemma \ref{lem: pro-mono}, the map of sequences of graded abelian groups $\{ k(n)_*Wh^{M}(X_i)\} \rightarrow\{ k(n)_*(X_i)\}$ is pro-zero, i.e., zero in the pro-category of graded abelian groups\footnote{To be clear: ``zero in the pro-category of graded abelian groups'' is a stronger condition than ``each individual degree is pro-zero in the category of abelian groups.'' Lemma \ref{lem: pro-mono} yields the stronger of these two conditions.}. Consequently, for each integer $i$, there exists an integer $j(i)\geq i$ such that $k(n)_*\Wh^M(X_{j(i)}) \rightarrow k(n)_*(X_i)$ is zero. 

Hence, given an element $x = (x_0, x_1, \dots )$ of $\lim_i k(n)_t\Wh^M(X_i)$, for each given initial subsequence $(x_0, \dots ,x_k)$ of $x$, the image of $x_{j(k)}\in k(n)_t\Wh^M(X_{j(k)})$ in $k(n)_t\Wh^{\gamma(j(k))}(X_{k})$ is zero. By the compatibility of the elements of the sequence $(x_0,x_{1}, \dots)$, the image of $x_i$ in $k(n)_t\Wh^{\gamma(j(k))}(X_k)$ is zero for all $i\leq k$. Consequently the map 
\[ \lim_i k(n)_t\Wh^M(X_i) \rightarrow \underset{s}\colim \lim_i k(n)_t\Wh^s(X_i)\]
sends every initial subsequence of $(x_0, x_1, \dots)$ to zero.  
Hence $x$ maps to zero in the colimit $\colim_s \lim_k k(n)_t\Wh^M(X_k)$, by the same argument (using finite-dimensionality of $\lim_{i\in I}k(n)_t(H\pi_sX_i)$ for all $s\geq M$) as in the proof of Theorem \ref{thm on holim of margolis-acyclics with lim1 vanishing}.
\end{proof}

\section{A higher chromatic height analogue of Mitchell's theorem}\label{application}
In this section, we give a sample application of the main theorem of this paper, Theorem \ref{thm on holim of margolis-acyclics with lim1 vanishing}. Our sample application is to the Morava $K$-theory of the topological periodic cyclic homology
and the topological negative cyclic homology 
of $B\langle n\rangle$, where $B\langle n\rangle$ is any $p$-primary $E_{2}$ form of $BP\langle n\rangle$ satisfying Running Assumption \ref{assumption}. See Definition \ref{forms} for the definition of a ``$p$-primary $E_{2}$ form of $BP\langle n\rangle$.''

In \cite{Mit90}, Mitchell proved that  $K(m)_*(K(\mathbb{Z}))\cong 0$ for $m\ge 2$, and consequently $K(m)_*(K(R))\cong 0$ for any $H\mathbb{Z}$-algebra $R$. 
We might consider the following higher chromatic height analogue of Mitchell's theorem. 
\begin{question}\label{main q 4}
Suppose $n$ is some integer, $n\in [-1,\infty)$ and $B\langle n \rangle$ is a $p$-primary $E_{2}$ form of $BP\langle n\rangle$. If $R$ is a $B\langle n \rangle$-algebra spectrum, then does $K(m)_*(K(R))$ vanish for all $m\ge n+2$?
\end{question}
This gives an upper bound on the chromatic complexity of the algebraic $K$-theory of a $B\langle n\rangle$-algebra. By Ravenel \cite[Thm. 2.1(d),(f),(i)]{MR737778}, we know that 
\[ K(m)_*(B\langle n \rangle)\cong 0 \]
for $m\ge n+1$, and every $B\langle n\rangle$-algebra is likewise $K(m)$-acyclic. 
Consequently a positive answer to Question \ref{main q 4} implies that, if there is a ``red-shift'' in algebraic K-theory of a $B\langle n\rangle$-algebra spectrum, then this shift is a shift of at most one. 

The main goal of this section is to answer Question \ref{main q 4} for all $(n,p)$ such that there exists a $p$-primary $E_2$ form $B\langle n\rangle$ of $BP\langle n\rangle$ satisfying Running Assumption \ref{assumption}. Question \ref{main q 4} is already known to have a positive result for $n=-1$ by Quillen \cite{MR0315016}, for $n=0$ by Mitchell \cite{Mit90}, and for $n=1$ and $p\ge 5$ by Ausoni--Rognes \cite{AR02} after $p$-completion. Consequently our main new contributions are the cases $n=1,2$ at $p=3$, though we do not know of obstacles to working out the same calculations at $p=2$. 
\begin{remark}\label{conditional remark}
Our results rely on the calculation, by Bruner and Rognes in \cite{BR05}, of the continuous homology $H^c_*(TC^-(BP\langle n\rangle); \mathbb{F}_p)$, for all primes $p$ and integers $n$. The proof uses the fact that when $BP\langle n\rangle$ is an $E_{\infty}$ ring spectrum than it admits certain Kudo--Araki--Dyer--Lashof operations. 

If $n$ is sufficiently large, then the spectrum $BP\langle n\rangle$ is known to {\em not} admit an $E_{\infty}$ ring structure: at $p=2$, this is a theorem of Lawson \cite{MR3862946}, and at $p>2$, a theorem of Senger \cite{Sen24}.
This seems to suggest that the main theorem in this section, Theorem \ref{main application}, cannot have consequences for large integers $n$. On the other hand, the spectrum $BP$ was shown to admit an $E_4$ ring spectrum model by Basterra--Mandell \cite{MR3065177}. Since the first draft of this paper appeared in pre-print form, Hahn--Wilson \cite{HW22} have proven that there are specific forms of $BP\langle n\rangle$ built from $MU_{(p)}$, which are $E_3$ ring spectra, and consequently $THH$ of such a form of $BP\langle n\rangle$ is an $E_2$-ring spectrum. If the calculations of Bruner--Rognes \cite{BR05} can be made to work using only the $E_m$ Dyer--Lashof--Kudo--Araki operations, for appropriate values of $m$, 
rather than the classical ($E_{\infty}$) Dyer--Lashof--Kudo--Araki operations, then our Theorem \ref{main application} would yield a positive answer to Question \ref{main q 4} for all primes $p$ and heights $n$.
\end{remark}
\begin{remark}
Since the first draft of this paper was posted, the papers \cite{CMNN24,LMMT24} appeared. These papers establish that Question \ref{main q 4} has a positive answer at all primes for any $p$-primary $E_1$ form of $BP\langle n\rangle$. Their work uses entirely different methods from ours, and our work applies to approximations of algebraic K-theory such as $TC^{-}$ and $TP$, whereas their approach works directly with algebraic $K$-theory. 
\end{remark}

\subsection{Trace methods}
Now we briefly recall the setup for topological periodic cyclic homology and topological negative cyclic homology. 
Let $R$ be an $E_1$ ring spectrum. Write $\mathbb{T}$ for the circle, regarded as the compact Lie group of unit vectors in the complex numbers. 
It is well-known that the topological Hochschild homology of $R$, denoted $THH(R)$, has a canonical 
action of the circle group  $\mathbb{T}$. 
Let $E\mathbb{T}=S(\mathbb{C}^{\infty})$ be the unit sphere in $\mathbb{C}^{\infty}$ where $\mathbb{T}$ acts on $\mathbb{C}^{\infty}$ coordinate-wise. Therefore, $E\mathbb{T}$ is a $\mathbb{T}$-space with free $\mathbb{T}$-action whose underlying space is contractible. We also consider the homotopy cofiber sequence 
\[ E\mathbb{T}_+ \to S^0 \to \widetilde{E}\mathbb{T}\]
in topological spaces where the map $E\mathbb{T}_+ \to S^0$ is induced by the unique map $E\mathbb{T}\to *$ of unpointed topological spaces. 

We define \emph{topological negative cyclic homology} as the homotopy fixed-point spectrum
\[ TC^{-}(R):=THH(R)^{h\mathbb{T}} = F(E\mathbb{T}_+,THH(R))^{\mathbb{T}}\]
and \emph{topological periodic cyclic homology} as 
the Tate spectrum
\[ TP(R):=THH(R)^{t\mathbb{T}} =\left ( \widetilde{E}\mathbb{T} \wedge F(E\mathbb{T}_+,THH(R)) \right )^{\mathbb{T}}.\]

There is a homological $\mathbb{T}$-homotopy fixed point spectral sequence 
\begin{align}\label{HFPSS} E_2^{*,*}(R)=H^*(\mathbb{T},H_*(THH(R);\mathbb{F}_p))\Rightarrow H^c_*(TC^{-}(R);\mathbb{F}_p).\end{align}
The notation $H^c_*$ refers to {\em continuous homology,} and is defined so that
\[ H^c_*(TC^{-}(R);\mathbb{F}_p)= \lim_k H_*(TC^{-}(R)[k];\mathbb{F}_p),\] 
where 
\[ TC^{-}(R)[k]= F\left((E\mathbb{T}^{(2k)})_+,THH(R)\right)^{\mathbb{T}},\]
and where $E\mathbb{T}^{(2k)}$ is the $2k$-skeleton of the standard presentation for $E\mathbb{T}$ as an $\mathbb{T}$-CW complex. Explicitly, we let $E\mathbb{T}^{(2k)}=S(\mathbb{C}^{k+1})$, and it is obtained from $E\mathbb{T}^{(2k-2)}$ by attaching a single $\mathbb{T}$-cell of dimension $2k$. We refer the reader to \cite[Sec. 2]{BR05} for a more detailed account. 

There is also a homological $\mathbb{T}$-Tate spectral sequence 
\begin{align}\label{TSS}\widehat{E}_2^{*,*}(R)=\hat{H}^*(\mathbb{T},H_*(THH(R);\mathbb{F}_p))\Rightarrow H^c_*(TP(R);\mathbb{F}_p),\end{align}
with abutment 
$H^c_*(TP(R);\mathbb{F}_p):= \underset{k}{\lim\thinspace}H_*(TP(R)[k];\mathbb{F}_p)$.
The filtration 
\[ \dots \rightarrow TP(R)[1] \rightarrow TP(R)[0] \rightarrow TP(R)[-1] \rightarrow \dots\]
is the {\em Greenlees filtration} \cite{MR908451}, and is given by 
\begin{align}\label{Greenlees filtration} TP(R)[k]:= \left( \widetilde{E}\mathbb{T}/ \widetilde{E}\mathbb{T}_{-2k}\wedge F(E\mathbb{T}_+,THH(R))\right)^{\mathbb{T}}.\end{align}
The definition of the $\mathbb{T}$-equivariant spectrum $\widetilde{E}\mathbb{T}_{2k}$ depends on whether $k$ is negative or not, as follows.
\begin{description}
\item[If $k\geq 0$] then $\widetilde{E}\mathbb{T}_{2k}$ is the cofiber of the map $E\mathbb{T}^{(2k)}_+\to S^0$, again induced by the unique unpointed $\mathbb{T}$-equivariant map  $E\mathbb{T}^{(2k)}\to *$.
\item[If $k< 0$] then $\widetilde{E}\mathbb{T}_{2k}$ is the Spanier-Whitehead dual of $\widetilde{E}\mathbb{T}_{-2k+2}$.
\end{description}

We will need to make use of two more related spectral sequences. By considering the filtrations on $E\mathbb{T}$ and on $\widetilde{E}\mathbb{T}$ only in a range of dimensions, there is also a spectral sequence
\begin{align}\label{tHFPSS} E_2^{*,*}(R)[k]=P_{k+1}(t)\otimes H_*(THH(R);\mathbb{F}_p)\Rightarrow H_*(TC^{-}(R)[k];\mathbb{F}_p)\end{align}
called the \emph{approximate homotopy fixed point spectral sequence}. There is also a spectral sequence 
\begin{align}\label{tTSS} \widehat{E}_2^{*,*}(R)[k]=P(t^{-1})\{t^{k}\}\otimes H_*(THH(R);\mathbb{F}_p)\Rightarrow H_*(TP(R)[k];\mathbb{F}_p),\end{align}
called the \emph{approximate Tate spectral sequence}, where 
\[P(t^{-1})\{t^{k}\}=\{xt^k: x\in P(t^{-1}) \}.\] 

Each of these four spectral sequences strongly converge when $\pi_*(THH(R))$ is bounded below and $H_m(THH(R);\mathbb{F}_p)$ is finite for all $m$. All of these spectral sequences are spectral sequences of $A_*$-comodules; this follows from the same proof as given in \cite[Prop. 2.1]{BR05} in the case of the spectral sequence \eqref{HFPSS}. When $H_m(THH(R);\mathbb{F}_p)$ is finite for all $m$ and bounded below, we also know that these are all spectral sequences of $E(Q_m)$-modules, where $Q_m$ is the Milnor primitive \cite{MR0099653} in the Steenrod algebra $A$, dual to the indecomposable $\btau_m\in A_*$.  In other words, the differentials are $Q_m$-linear. 

More generally, for a generalized homology theory $E_*$ we write 
\begin{align*}
E_*^c(TP(R)):= & \lim_{k} E_*(TP(R)[k])  \text{ and } \\
E_*^c(TC^{-}(R)):= & \lim_{k} E_*(TC^{-}(R)[k]).
\end{align*}
The mod $p$ homology of topological Hochschild homology comes equipped with an operator $\sigma$, sometimes called the {\em circle operator}. The circle operator $\sigma$ is induced by the canonical map 
\[\sigma \colon \thinspace  R\wedge \mathbb{T} \to R\wedge \mathbb{T}_+  \to THH(R)\] 
so that $\sigma  x\in H_*(THH(R);\mathbb{F}_p)$ is the image of 
\[ x\otimes \iota \in  H_*(R;\mathbb{F}_p)\otimes H_*(\mathbb{T};\mathbb{F}_p) \cong H_*(R\wedge \mathbb{T};\mathbb{F}_p) \subset H_*(R\wedge \mathbb{T}_+;\mathbb{F}_p)\] 
where $\iota$ is the canonical generator of $H_1(\mathbb{T};\mathbb{F}_p)$. Note that this operator is compatible with the canonical $\mathbb{T}$-action
\[ \alpha \colon  \thinspace  THH(R)\wedge \mathbb{T}\to  THH(R) \wedge \mathbb{T}_+\to THH(R) \]
in the sense that 
\[ \sigma(x) =\alpha (\eta(x)\otimes \iota)\]
where $\eta \colon \thinspace H_*(R) \to H_*(THH(R))$ is the unit map of $R$ algebra $THH(R)$ and we write $\alpha$ for the induced map 
\[ \alpha \colon \thinspace H_*(THH(R))\otimes H_*(\mathbb{T})\to  H_*( THH(R)\wedge \mathbb{T}_+)\to  H_*(THH(R)) \]
by abuse of notation. Also, by \cite[Prop. 5.10]{AR05} we know that
\[\sigma ( x \cdot y) = x \cdot \sigma (y) + (-1)^{|y|} \sigma (x) \cdot y \]
for $x,y\in H_*(R;\mathbb{F}_p)$, so in particular, $\sigma (x^p)=0$ whenever $|x|$ is even. In particular, $\sigma(1)=0$.
 
 \subsection{A higher height Mitchell theorem}\label{A higher height Mitchell theorem}
We now recall that by Angeltveit--Rognes \cite[Thm. 5.12]{AR05}, for odd primes $p$, there is an isomorphism 
\[ H_*(THH(BP\langle n \rangle);\mathbb{F}_p)\cong 
				H_*(BP\langle n \rangle;\mathbb{F}_p)\otimes_{\mathbb{F}_p} E(\sigma \bxi_1,\sigma \bxi_2,\ldots ,\sigma \bxi_{n+1})\otimes_{\mathbb{F}_p} P(\sigma \btau_{n+1})
\]
of $H_*(BP\langle n \rangle;\mathbb{F}_p)$-algebras. The left $A_*$-coaction 
\[ \nu_n\colon\thinspace  H_*(THH(BP\langle n \rangle);\mathbb{F}_p)\to A_*\otimes_{\mathbb{F}_p} H_*(THH(BP\langle n \rangle);\mathbb{F}_p)\]
is given as follows:
\begin{itemize}
\item On elements in $H_*(BP\langle n \rangle;\mathbb{F}_p) \subseteq H_*(THH(BP\langle n \rangle);\mathbb{F}_p)$, $\nu_n$ is simply the restriction of the coproduct of $A_*$ to $H_*(BP\langle n \rangle;\mathbb{F}_p)\subset A_*$.
\item On elements in $H_*(THH(BP\langle n \rangle);\mathbb{F}_p)$ of
 the form $\sigma \bxi_i$ for $1\le i\le n$, and on the element $\sigma \btau_{n+1}$, $\nu_n$ is given by the formula
\begin{align}\label{nu sigma}
 \nu_n(\sigma x)=(1\otimes \sigma)(\nu_n(x))
 \end{align}
from \cite[Eq. 511]{AR05}.
\item On the remaining elements of $H_*(THH(BP\langle n \rangle);\mathbb{F}_p)$, the coaction $\nu_n$ is given by the formula $\nu_n(xy)=\nu_n(x)\nu_n(y)$.\end{itemize}

In particular, because $\sigma(\bxi_i^{p^j})=0$ for $j\ge 1$, formulas \eqref{xi coprod} and \eqref{nu sigma} imply that the elements $\sigma \bxi_k$ are comodule primitives. Formulas \eqref{tau coprod} and \eqref{nu sigma} imply that 
\begin{align}
\label{sigma btau coaction} \nu(\sigma\btau_{n+1}) &=1\otimes \sigma \btau_{n+1} + \btau_0\otimes \sigma \bxi_{n+1},\end{align}
which also appears in Theorem 5.12 of \cite{AR05}.
 
Next, we recall the computation of continuous homology of topological negative cyclic homology and topological periodic cyclic homology of $BP\langle n\rangle$.
Bruner and Rognes \cite[Proposition 6.1]{BR05} prove that the $E_{\infty}$ term of the homological homotopy fixed-point spectral sequence \eqref{HFPSS} admits an isomorphism
\[ 
		E_\infty^{*,*}(BP\langle n \rangle)\cong  \left( P(t)\otimes_{\mathbb{F}_p} M_1\otimes_{\mathbb{F}_p} M_2 \right) \oplus T
\]
where 
\begin{align} 
\label{M1} M_1= &E(\tau_{j+1}^{\prime}\mid j\ge n+1 ),\\
\label{M2} M_2=&P(\bxi_{j+1}|j\ge n+1)\otimes P(\bxi_j^p \mid1\le j \le n+1)\otimes E(\bxi_j^{p-1}\sigma \bxi_j \mid 1\le j \le n+1),
\end{align}
and where $T$ consists of classes $x$ in filtration $s=0$ with $tx=0$ and with
\[ \tau_{k+1}^{\prime}=\btau_{k+1}-\btau_k(\sigma \btau_k)^{p-1}\]
for $k\ge m+1$. 

One can then easily deduce the computation of the $E_{\infty}$ term 
\[ 
		\widehat{E}_\infty^{*,*}(BP\langle n \rangle)\cong P(t,t^{-1})\otimes_{\mathbb{F}_p} M_1\otimes_{\mathbb{F}_p} M_2 
\]
of the homological $\mathbb{T}$-Tate spectral sequence \eqref{TSS} for $BP\langle n\rangle$, where $M_1$ and $M_2$ are as defined in \eqref{M1} and \eqref{M2}. There are no possible additive extensions because the abutment is a graded $\mathbb{F}_p$-vector space. 
It will also be useful to record the structure of the $E_{\infty}$-page of the approximate Tate spectral sequence \eqref{tTSS}. There is an isomorphism of $E(Q_m)$-modules, for all $m\ge n+2$,
\begin{align}\label{truncated Einf}
	\widehat{E}_\infty^{*,*}(BP\langle n \rangle)[k]=& \left ( P(t^{-1})\{t^{k-1}\} \otimes_{\mathbb{F}_p} M_1 \otimes_{\mathbb{F}_p} M_2 \right )\oplus V_k(n)\{t^k\}
\end{align}
where 
\begin{align} 
 \label{vnk} V_k(n)= &H_*(THH(BP\langle n\rangle);\mathbb{F}_p)/\im (d_2^{2-2k,*}). 
 \end{align}
Here $d^{2-2k,*}_2$ denotes the differential in the approximate Tate spectral sequence \eqref{tTSS}, regarded as a map 
\[ d_2^{2-2k,*} \colon \thinspace H_*(THH(BP\langle n\rangle;\mathbb{F}_p) \to  H_{*+1}(THH(BP\langle n\rangle;\mathbb{F}_p).\]
Note that the spectral sequence \eqref{tTSS} collapses at the $E_3$-term as a consequence of \cite[Prop. 6.1]{BR05}, so we do not need to consider the image of longer differentials in our description of $V_k(n)$. 

Now we prove a lemma that will be useful for the main Margolis homology calculation. 
\begin{lemma}\label{little Margolis lemma}
Suppose $k$ is an integer, and suppose that
\[ 
  \xymatrix{
    \dots \ar@{^(->}[r]  &  F_2 \ar@{^(->}[r] \ar[d] &  F_1 \ar@{^(->}[r] \ar[d] &   F_0 \ar[d]  \\
    \dots \ar@{^(->}[r] &  F_2^{\prime} \ar@{^(->}[r] &  F_1^{\prime} \ar@{^(->}[r] &   F_0^{\prime} \\
  }
\]
is a map of filtered graded $E(Q_m)$-modules satisfying the following conditions: \begin{enumerate}
\item $F_j=0$ for $j\ge k+2$,
\item $F_j^{\prime}=0$ for $j\ge k+1$, 
\item $F_j=F_j^{\prime}$ for $j<k$,
\item and $H(F_j/F_{j+1},Q_m)$ and $H(F^{\prime}_{j}/F^{\prime}_{j+1},Q_m)$ are trivial for all $j\le k$.
\end{enumerate}
Then the map of graded $\mathbb{F}_p$-vector spaces induced in Margolis homology
\[H(F_{0},Q_m)\longrightarrow H(F_{0}^{\prime},Q_m)\]
is the zero map.
\end{lemma}
\begin{proof}
By the long exact sequences in Margolis homology induced by the short exact sequences
\[ \xymatrix{0 \ar[r]  & F_{j+1} \ar[r] & F_{j}\ar[r] & F_{j}/F_{j+1} \ar[r]  & 0
  }
\]
\[
    \xymatrix{ 0 \ar[r] & F^{\prime}_{j+1} \ar[r] & F^{\prime}_{j}\ar[r] & F^{\prime}_{j}/F^{\prime}_{j+1} \ar[r] & 0
  }
\]
and the assumptions that $H(F_j/F_{j+1},Q_m)=H(F^{\prime}_j/F^{\prime}_{j+1},Q_m)=0$ for $j\le k$, we know that the induced maps 
$H(F_{j+1},Q_m)\overset{\cong}{\longrightarrow} H(F_j,Q_m)$ and 
$H(F^{\prime}_{j+1},Q_m)\overset{\cong}{\longrightarrow} H(F^{\prime}_j,Q_m)$ 
are isomorphisms for $j\le k$. When $j=k$, the map of long exact sequences in Margolis homology associated to the map of short exact sequences 
\[
  \xymatrix{
    0 \ar[r] \ar[d] & F_{k+1} \ar[r] \ar[d] & F_{k}\ar[r] \ar[d] & F_{k}/F_{k+1} \ar[r] \ar[d] & 0 \ar[d] \\
    0 \ar[r] &0  \ar[r] & F^{\prime}_{k}\ar[r] & F^{\prime}_{k} \ar[r] & 0
  }
\]
has a subdiagram of the form 
\[
  \xymatrix{
   H(F_{k+1},Q_m)\ar[r]^{\cong} \ar[d] &  H(F_{k},Q_m)\ar[r] \ar[d] & 0\ar[d]  \\
     0 \ar[r] & H(F_k^{\prime},Q_m) \ar[r]^{\cong} & H(F_k^{\prime},Q_m)
  }
\]
so the map $H(F_k,Q_m)\to H(F^{\prime}_k,Q_m)$ is the zero map. Now the compatible isomorphisms \[ H(F_k,Q_m)\cong H(F_{k-1},Q_m)\cong H(F_{k-2},Q_m)\cong \dots \mbox{\ \ \ and} \]\[ H(F^{\prime}_k,Q_m)\cong H(F^{\prime}_{k-1},Q_m)\cong H(F^{\prime}_{k-2},Q_m)\cong \dots \] yield that $H(F_{j},Q_m)\longrightarrow H(F_{j}^{\prime},Q_m)$
is also the zero map for all $j<k$.
\end{proof}
We now apply this lemma in the main example of interest.
\begin{prop}\label{Margoliscomputaton}
Let $p$ be odd, and let $B\langle n\rangle$ be a $p$-primary $E_{2}$ form of $BP\langle n\rangle$. There is an isomorphism of pro-objects in abelian groups
\[ \{ H( H_* (TP(B\langle n\rangle)[k];\mathbb{F}_p),Q_m)\}_{k\in \mathbb{Z}} \cong 0 \]
for all $m\ge n+2$. Consequently, there is an isomorphism
\[ \underset{k}{\lim\thinspace}H( H_* (TP(B\langle n\rangle)[k];\mathbb{F}_p),Q_m)\cong 0 .\]
\end{prop}
\begin{proof}
We will show that each of the induced maps 
\[ H(H_* (TP(B\langle n\rangle)[k];\mathbb{F}_p),Q_m )\to  H(H_* (TP(B\langle n\rangle)[k-1];\mathbb{F}_p),Q_m)\]
is the zero map for $m\ge n+2$, which implies the result. 

First, there is a map of $E(Q_m)$-module spectral sequences
\[ 
  \xymatrix{
    P(t^{\pm 1}) \ar@{=>}[d] \ar[r] & P(t^{-1})\{t^k\} \otimes_{\mathbb{F}_p} H_*(THH(B\langle n \rangle ;\mathbb{F}_p)\ar@{=>}[d] \\
    H_*^c(TP(S);\mathbb{F}_p) \ar[r] & H_*(TP(B\langle n \rangle ;\mathbb{F}_p)[k])
  }
\]
where the first spectral sequence collapses for bidegree reasons. The $Q_m$-action is trivial on $t^j$ in $H_*^c(TP(S);\mathbb{F}_p)$, because $H_*^c(TP(S);\mathbb{F}_p)$ is concentrated in even degrees. Consequently $Q_m(t^j)=0$ in the abutment $H_*(TP(B\langle n \rangle ;\mathbb{F}_p)[k])$ of the approximate Tate spectral sequence \eqref{tTSS} for all $m$ and all $k \ge j>-\infty$. 

Our next task is to compute the Margolis homology $H(\mathbb{F}_p\{ t^j\} \otimes_{\mathbb{F}_p} M_1\otimes_{\mathbb{F}_p} M_2,Q_m)$ where $M_1$ and $M_2$ are the $E(Q_m)$-modules defined in \eqref{M1} and \eqref{M2}, and where $\mathbb{F}_p\{ t^j\}$ is the graded $\mathbb{F}_p$-vector subspace of $P(t^{-1})\{ t^k\}$ spanned by $t^j$. The $\mathbb{F}_p$-vector space $\mathbb{F}_p\{ t^j\}$ is equipped with the $E(Q_m)$-module structure in which $Q_m$ acts trivially. The tensor product $\mathbb{F}_p\{ t^j\} \otimes_{\mathbb{F}_p} M_1\otimes_{\mathbb{F}_p} M_2$ is then a $E(Q_m)$-module using the usual action of $Q_m$ on a tensor product of $E(Q_m)$-modules.

Recall that $M_1 \cong E(\tau^{\prime}_{n+1}, \tau^{\prime}_{n+2}, \dots)$. Write $\overline{M}_1$ for the exterior algebra on the generators $\tau^{\prime}_{n+1}, \tau^{\prime}_{n+2}, \dots$ {\em except} for $\tau^{\prime}_m$. We claim that $M_1 \cong E(\tau^{\prime}_m)\otimes \overline{M}_1$ as $E(Q_m)$-modules. This tensor splitting follows from the formula
\begin{align}\label{Q_m acting on tau} Q_m(\tau_k^{\prime})&=\bar{\xi}_{k-m}^{p^{k-m}},\end{align}
which in turn follows from the $A_*$-coaction on $\tau_k^{\prime}$ being given by
\begin{align}\label{nu on tau prime} \nu(\tau_k^{\prime}) &=1\otimes \tau_k^{\prime}+\sum_{i+j=k}\bar{\tau}_i\otimes \bar{\xi}_j^{p^i},\end{align}
as a consequence of \eqref{sigma btau coaction}.
 
As another consequence of \eqref{Q_m acting on tau}, $E(Q_m)$ acts freely on $E(\tau_m^{\prime})$. Hence $\mathbb{F}_p\{ t^j\} \otimes_{\mathbb{F}_p} M_1\otimes_{\mathbb{F}_p} M_2$ has a free $E(Q_m)$-module as a tensor factor. This is enough to conclude that $\mathbb{F}_p\{ t^j\} \otimes_{\mathbb{F}_p} M_1\otimes_{\mathbb{F}_p} M_2$ is free over $Q_m$, hence its Margolis $Q_m$-homology is trivial.

We then write $F_{\bullet}^{(k)}$ for the abutment of the approximate Tate spectral sequence \eqref{tTSS}, regarded as a filtered graded $E(Q_m)$-module. Consider the map of filtered graded $E(Q_m)$-modules
$F_{\bullet}^{(k+1)}\to F_{\bullet}^{(k)}$. By the calculation of Bruner and Rognes (see \eqref{HFPSS second mention}, above), for any integers $j,k$ satisfying $j<k$, there are isomorphisms
\begin{align} \label{HFPSS second mention} F_{j}^{(k)}/F_{j+1}^{(k)}&\cong \mathbb{F}_p\{t^j\}\otimes_{\mathbb{F}_p} M_1\otimes_{\mathbb{F}_p} M_2 . \end{align}
Hence there are identifications $F_j^{(k+1)}\cong F_j^{(k)}$ for $j<k$, and $F_j^{(k)}=0$ for $j>k$, and in the preceding paragraphs of this proof, we showed that the Margolis $Q_m$-homology of the quotients $F_j^{(k)}/F_{j+1}^{(k)}$ is trivial for $j\leq k$. That confirms all the hypotheses of Lemma \ref{little Margolis lemma}. The lemma then implies the claimed result.  
\end{proof}
\begin{lemma}\label{comodule primitive lemma}
Let $p$ be odd.
Fix a positive integer $k$. Suppose $L$ is a graded $A_*$-comodule such that $L\cong P(t^{-1})\{t^k\}\otimes_{\mathbb{F}_p} B\otimes_{\mathbb{F}_p} E,$
where 
\begin{itemize}
\item $B$ is isomorphic to a graded comodule subalgebra of the dual Steenrod algebra $A_*$, 
\item $E$ is a graded $A_*$-comodule, finite-dimensional as a $\mathbb{F}_p$-vector space, and consisting only of $A_*$-comodule primitives, 
\item 
and we have an isomorphism $P(t^{-1})\{t^k\} \cong H_*(TP(S)[k];\mathbb{F}_p)$ of $A_{*}$-comodules. 
\end{itemize}
Then the $A_*$-comodule primitives in $L$ are bounded above by $\sum_{i\in I} \left| e_i\right|$, where $\{ e_i: i\in I\}$ is a homogeneous $\mathbb{F}_p$-linear basis for $E$, and where $\left| e_i\right|$ is the degree of $e_i$.
\end{lemma}
\begin{proof} 
Recall that the $j$-th stage in the Greenlees filtration of $TP(S)$ has homology 
\[ H_*(TP(S)[j];\mathbb{F}_p)\cong P(t^{-1})\{t^j\}=\{xt^j:x\in P(t^{-1})\}\]
as a quotient of $H_*(TP(S);\mathbb{F}_p)\cong P(t^{\pm 1})$.
Hence we may consider $L$ as a filtered graded $\mathbb{F}_p$-vector space 
\[ L=L_{k}\supset L_{k-1}\supset \dots \supset L_{-\infty}=0,\]
by defining $L_j$ as
\[L_{j}=H_*(TP(S)[j];\mathbb{F}_p)\otimes_{\mathbb{F}_p} B\otimes_{\mathbb{F}_p} E.\]
We refer to this filtration as the $t$-weight filtration on $L$. Specifically, $t^j$ has $t$-weight $j$. (Note that higher $t$-weight corresponds to lower degree, since $t^j$ is in degree $-2j$ in homology.)

We may regard the $\mathbb{F}_p$-vector space quotient $L_{j}/L_{j-1}$ of $L_j$ also as a graded $\mathbb{F}_p$-submodule of $L$, by identifying the quotient $L_j/L_{j-1}$ with
\[ B\otimes_{\mathbb{F}_p} E\{t^j\}=\{zt^j:z\in B\otimes_{\mathbb{F}_p} E\}\subset L.\] 
Since $B$ is isomorphic to a comodule subalgebra of the dual Steenrod algebra, the only $A_*$-comodule primitive of $B$ is the element $1$. Writing 
\[ \psi_B\colon \thinspace B\to A_*\otimes_{\mathbb{F}_p} B\]
for the $A_{*}$-coaction, we have 
\[ \psi_B(x_i)=1\otimes x_i+ \sum_{j\ge 0}y_j^i\otimes z_j^i.\]
If $x_i\in B$ is not a $\mathbb{F}_p$-linear multiple of $1$, then the sum $\sum_{j\ge 0}y_j^i\otimes z_j^i$ must be nonzero, or otherwise $x_i$ would be an $A_{*}$-comodule primitive. We also know that $t\in P(t^{-1})\{t^k\}$ has coaction\footnote{The coaction \eqref{coaction on cp infty} is extremely classical, but curiously, we know very few places where it appears explicitly in the literature. One place is page 14 of \cite{MR4557878}. It is, however, derivable from Milnor's formula $\lambda^*(t) = \sum_{j\geq 0} t^{p^j} \otimes \xi_j$ for the adjoint $A_*$-coaction on the {\em cohomology} of $\mathbb{C}P^{\infty}$, from Lemma 6 in section 5 of \cite{MR0099653}. The idea is to identify positive-degree Tate cohomology with negative-degree homology, in such a way that the $A_*$-coaction on negative-degree homology of $\mathbb{C}P^{\infty}_{-\infty}$ agrees with the adjoint $A_*$-coaction on cohomology of $\mathbb{C}P^{\infty}$.} 
\begin{align}\label{coaction on cp infty} \psi_k(t) &=\sum_{j\ge 0}\overline{\xi}_j\otimes t^{p^j}\end{align}
in which terms involving a power $t^{p^j}$ with $p^j>k$ are omitted.
More generally, we can compute the coaction on $P(t^{-1})\{t^k\}$ by the formula
\[\psi_k(t^i)=(\sum_{j\ge 0}\xi_j\otimes t^{p^j})^i \]
in which the sum is taken in the graded $\mathbb{F}_p$-algebra $\mathbb{F}_p[t^{\pm 1}]$, and then terms with $t$-weight $>k$ are omitted.

Since $E$ consists only of $A_*$-comodule primitives, the natural map
\[\Hom_{A_*}(\mathbb{F}_p,M)\otimes_{\mathbb{F}_p} E \rightarrow \Hom_{A_*}(\mathbb{F}_p,M\otimes_{\mathbb{F}_p} E) \]
is an isomorphism for any $A_*$-comodule $M$.
Hence there is an isomorphism 
\begin{align}\label{iso 0965665} \Hom_{A_*}(\mathbb{F}_p,P(t^{-1})\{t^k\}\otimes_{\mathbb{F}_p} B\otimes_{\mathbb{F}_p} E) &\cong \Hom_{A_*}(\mathbb{F}_p,P(t^{-1})\{t^k\}\otimes_{\mathbb{F}_p} B)\otimes_{\mathbb{F}_p} E.\end{align}

The highest-degree element in the graded comodule $E$ is the product $e_{1}\cdot \ldots \cdot e_{n}$ of all the generators of the exterior algebra $E$. This product has degree $\sum_{i=1}^{n}|e_{i}|$. If we can show that $P(t^{-1})\{t^k\}\otimes_{\mathbb{F}_p} B$ has no nonzero $A_*$-comodule primitives in positive degrees, then isomorphism \eqref{iso 0965665} will tell us that $L$ has no $A_*$-comodule primitives in degree greater than $\sum_{i\in I} \left| e_i\right|$, finishing the proof of the theorem.

Even better: when $k\geq 2p-2$, neither $P(t^{-1})\{t^k\}$ nor $B$ have any comodule primitives in positive degrees. To see that this is the case for $P(t^{-1})\{t^k\}$, it is a matter of simple calculation, using the formula \eqref{coaction on cp infty}. To see that $B$ has no comodule primitives in positive degrees, one need only remember that $B$ was assumed to be a comodule subalgebra of the Steenrod algebra.

Continue to assume $k\geq 2p-2$. While neither $P(t^{-1})\{t^k\}$ nor $B$ have any comodule primitives in positive degrees, we need to verify that their tensor product $A_*$-comodule $P(t^{-1})\{t^k\} \otimes_{\mathbb{F}_p} B$ also has no comodule primitives in positive degrees. Suppose we have a homogeneous, positive-degree sum of the form $\sum_{i}t^i\otimes x_i\in P(t^{-1})\{t^k\}\otimes_{\mathbb{F}_p} B$, with $x_i\in B$, all but finitely many $x_i$ are zero, and the sum satisfies $\sum_{i}t^i\otimes x_i\not\in P(t^{-1})\{t^k\}$ and $\sum_{i}t^i\otimes x_i\not\in B$. We need to verify that $\sum_{i}t^i\otimes x_i$ is not an $A_*$-comodule primitive.

Write $\psi_B$ for the $A_*$-coaction map on $B$.
Write $1\otimes x_i + \sum_{j\geq 0} y_j^i \otimes z_j^i$ for $\psi_B(x_i)$, and write 
\[ \psi_{k,B}\colon\thinspace P(t^{-1})\{t^k\}\otimes_{\mathbb{F}_p} B\to A_*\otimes_{\mathbb{F}_p} P(t^{-1})\{t^k\}\otimes_{\mathbb{F}_p} B\]
for the $A_{*}$-coaction on $P(t^{-1})\{t^k\}\otimes_{\mathbb{F}_p} B$. 
The coaction map $\psi_{k,B}$ satisfies
\begin{align*} 
 \psi_{\ell,B}(x_{i}t^{i})
  &=   \psi_B(x_{i})\cdot \psi_{\ell}(t^{i}) \\
  &= (1\otimes x_{i}+ \sum_{j\ge 0}y_j^i\otimes z_j^i)(\sum_{j\ge 0}\overline{\xi}_j\otimes t^{p^j})^{i} 
  \end{align*}
in which the sum is taken in the graded $\mathbb{F}_p$-algebra $\mathbb{F}_p[t^{\pm 1}]\otimes_{\mathbb{F}_p} B$, and then terms with $t$-weight $>k$ are omitted.
The $t$-weight filtration on $L$ is a filtration by $A_*$-subcomodules, i.e., 
\[ \psi_{\ell,B}(L_\ell)\subset A_*\otimes_{\mathbb{F}_p} L_{\ell}\]
for each $\ell$. Consequently, nonzero elements in distinct $t$-weight filtrations in $L$ cannot cancel each other. 

We now consider the coaction on $\sum_{i}x_i\cdot t^{i}$ where we assume that all $x_i$ are nonzero and the sum is therefore a finite sum. 
Then 
\begin{align*} \psi_{\ell,B}(\sum_{i}t^{i}\cdot x_i) &=\sum_i(1\otimes x_i+ \sum_{j\ge 0}y_j^i\otimes z_j^i)(\sum_{j\ge 0}\xi_j\otimes t^{p^j})^i \end{align*}
in which the sum is taken in the graded $\mathbb{F}_p$-algebra $\mathbb{F}_p[t^{\pm 1}]\otimes_{\mathbb{F}_p} B$, and then terms with $t$-weight $>k$ are omitted.
As nonzero terms of distinct $t$-weight cannot cancel each other, it suffices to show that the reduction 
\[\overline{\psi}_{k,B}(\sum_{i}x_i\cdot t^{i})\subset A_*\otimes_{\mathbb{F}_p} L_i/L_{i+1}\]
of $\psi_{k,B}(\sum_{i}x_i\cdot t^{i})$ is nontrivial after subtracting $1\otimes \sum_{i}x_i\cdot t^i$.
We therefore need to show that 
\begin{align*}\overline{\psi}_{k,B}(\sum_{i}x_i\cdot t^{i})-1\otimes \sum_{i}x_i\cdot t^{i} &\ne 0\in A_*\otimes_{\mathbb{F}_p} L_i/L_{i-1}.\end{align*}
By inspection, 
\begin{align*}\overline{\psi}_{k,B}(\sum_{i}x_i\cdot t^{i})-1\otimes \sum_{i}x_i\cdot t^{i} &=( \sum_{j\ge 0}y_j^i\otimes t^{j}\cdot z_j^i)=(1\otimes t^j) \cdot (\sum_{j\ge 0}y_j^i\otimes z_j) ,\end{align*}
but we know that $(\sum_{j\ge 0}y_j^{i}\otimes z_j)\ne 0$, since $B$ has no nonzero $A_*$-comodule primitives in positive degrees. We also know that $z\cdot t^j\ne 0$ when $z\ne 0\in B$ and $j\in \mathbb{Z}$, so this sum must be nontrivial. The conclusion is that $\sum_{i\ge 0}x_i\cdot t^i$
cannot be a comodule primitive, as desired. 
\end{proof}

\begin{lemma}\label{hyp2}
Let $B\langle n\rangle $ be a $p$-primary $E_{2}$ form of $BP\langle n \rangle$ satisfying Running Assumption \ref{assumption}. Suppose $p$ is odd.
For each $n$, there exists a positive integer $M(n)$, independent of $k$, such that the spectrum $TP(B\langle n\rangle)[k]$ satisfies condition $H(M(n))$ for every $k\ge 1$. 
\end{lemma}
\begin{proof}
To prove the lemma, it suffices to check that the graded $\mathbb{F}_p$-vector space of $A_*$-comodule primitives in $H_*(TP(BP\langle n \rangle )[k];\mathbb{F}_p)$ is bounded above. Recall from \eqref{truncated Einf} that the $E_{\infty}$-term of the approximate Tate spectral sequence \eqref{tTSS} is 
\begin{align*} 
\widehat{E}_\infty^{*,*}(BP\langle n \rangle)[k]\cong & \left(P(t^{-1})\{ t^{k-1}\}\otimes_{\mathbb{F}_p} M_1 \otimes_{\mathbb{F}_p} M_2 \right) \oplus  V_k(n)\{t^k\},
\end{align*}
and it strongly converges to $H_*(TP(BP\langle n \rangle )[k];\mathbb{F}_p)$. The graded $A_*$-comodules $M_1$, $M_2$ and $V_k(n)$ were defined in formulas \eqref{M1}, \eqref{M2} and \eqref{vnk} respectively.  
We first claim that that the $A_{*}$-comodule primitives in this $E_{\infty}$-term are bounded above by the integer $M(n)=\sum_{j=1}^{n}|\bxi_j^{p-1}\sigma \bxi_j|$. To prove this it suffices to prove that the graded $A_{*}$-subcomodule of $A_{*}$-comodule primitives in $P(t^{-1})[t^{k-1}]\otimes_{\mathbb{F}_p} M_1 \otimes_{\mathbb{F}_p} M_2$ and $V_k(n)\{t^k\}$ are each trivial in grading degrees greater than $M(n)$. 

We have an isomorphism of graded $A_*$-comodules $M_1\otimes_{\mathbb{F}_p} M_2\cong B\otimes_{\mathbb{F}_p} E$, where $B$ and $E$ are as follows:

$B$ is \[ E(\tau_{j+1}^{\prime}| j\ge n+1 )\otimes P(\bxi_{j+1}|k\ge n+1)\otimes P(\bxi_j^p|1\le j \le n+1),\]
regarded as a graded $A_*$-subcomodule of the dual Steenrod algebra by the map $B\rightarrow A_*$ sending $\tau_{j+1}^{\prime}$ to $\tau_{j+1}$ (this is a consequence of \eqref{nu on tau prime}), sending $\bxi_{j+1}$ to $\bxi_{j+1}$, and sending $\bxi_j^p$ to $\bxi_j^p$. 

$E$ is $E(\bxi_j^{p-1}\sigma \bxi_j |1\le j \le n)$ with trivial $A_*$-coaction, i.e., every element of $E$ is a $A_*$-comodule primitive. This is a consequence of the calculation of the homology of $THH(BP\langle n\rangle)$ in \cite{AR05} together with the calculation of the homological approximate Tate spectral sequence for $THH(BP\langle n\rangle)$ in \cite{BR05}: one just needs to keep track of the $A_*$-coaction while running the spectral sequence.
We give a bit of detail of how this works, by explaining how to verify that $\bxi_1^{p-1}\sigma \bxi_1$ is an $A_*$-comodule primitive; the same verification for the other classes in $E$ is left to the interested reader as an exercise. 

As shown in \cite{BR05}, the homological approximate Tate spectral sequence for $BP\langle n\rangle$ collapses at the $E_{3}$-page. Consequently $\widehat{E}_\infty^{*,*}(BP\langle n \rangle)[k]$ is a subquotient of $E_{2}$. 
The $A_*$-coaction on $H_*(TP(BP\langle n\rangle)[k];\mathbb{F}_p)$
 is computed by first restricting the coaction on the $E_{2}$-page to the $A_{*}$-subcomodule consisting of the $d_{2}$-cycles, and then reducing modulo $d_{2}$-boundaries. 
On the $E_{2}$-term we have 
\[\psi(\bxi_1^{p-1}\sigma \bxi_1) = (\bxi_1\otimes 1 + 1\otimes \bxi_1)^{p-1}(1\otimes \sigma \bxi_1)=\sum_{i=0}^{p-1}\binom{p-1}{i} \bxi_1^i \otimes \bxi_1^{p-1-i} \sigma \bxi_1.\] 
and this remains the coaction after restricting to $d_{2}$-cycles. Every summand except $1\otimes \bxi_1^{p-1} \sigma \bxi_1$ is a $d_2$-boundary. Consequently $\bxi_1^{p-1}\sigma \bxi_1$ is an $A_*$-comodule primitive.

It therefore follows that the $A_*$-subcomodule of primitives in 
\[ P(t^{-1})[t^{k-1}]\otimes_{\mathbb{F}_p} M_1 \otimes_{\mathbb{F}_p} M_2 \]
is trivial in grading degrees greater than $M(n)=\sum_{j=1}^{n}|\bxi_j^{p-1}\sigma \bxi_j|$ by Lemma \ref{comodule primitive lemma}.

From Theorem 5.12 of \cite{AR05}, we know that
\begin{align*} H_*(THH(BP\langle n \rangle);\mathbb{F}_p) &\cong E(\sigma\overline{\xi}_1,\dots ,\sigma\overline{\xi}_{n+1})\otimes_{\mathbb{F}_p} P(\sigma\overline{\tau}_{n+1})\otimes_{\mathbb{F}_p} H_*(BP\langle n\rangle ;\mathbb{F}_p)
\end{align*}
and by formulas \eqref{tau coprod} and \eqref{nu sigma} 
the $A_{*}$-comodule primitives in $H_*(THH(BP\langle n \rangle);\mathbb{F}_p)\{t^{k}\}$ are contained in $\ker d_{2}^{-2k,*}$, where 
\[d_{2}^{-2k,*}\colon \thinspace H_{*}(THH(BP\langle n\rangle;\mathbb{F}_{p})\{t^{k}\}\to H_{*}(THH(BP\langle n\rangle;\mathbb{F}_{p})\{t^{k+1}\}\]
is the $d_{2}$-differential in the approximate Tate spectral sequence. Consequently, the map 
\[ \Hom_{A_{*}}(\mathbb{F}_{p},\ker (d_{2}^{-2k,*}))\to \Hom_{A_{*}}(\mathbb{F}_{p},H_{*}(THH(BP\langle n\rangle);\mathbb{F}_{p}))\]
induced by the canonical inclusion is an epimorphism. We then consider the map of long exact sequences
\[ 
	\xymatrix{
	\Hom_{A_{*}}(\mathbb{F}_{p},\ker (d_{2}^{-2k,*}))\ar@{->>}[r]\ar[d] & \Hom_{A_{*}}(\mathbb{F}_{p},H_{*}(THH(BP\langle n\rangle);\mathbb{F}_{p})) \ar[d] \\ 
		\Hom_{A_{*}}(\mathbb{F}_{p},\ker (d_{2}^{-2k,*})/\im(d_{2}^{2-2k,*}))\ar[r] \ar[d] & \Hom_{A_{*}}(\mathbb{F}_{p},H_{*}(THH(BP\langle n\rangle);\mathbb{F}_{p})/\im(d_{2}^{2-2k,*})) \ar[d] \\ 
	\Ext_{A_{*}}^{1}(\mathbb{F}_{p}, \im(d_{2}^{2-2k,*})\ar[r]^{\id} \ar[d] & \Ext_{A_{*}}^{1}(\mathbb{F}_{p}, \im(d_{2}^{2-2k,*}) \ar[d] \\
	\Ext_{A_{*}}^{1}(\mathbb{F}_{p},\ker (d_{2}^{-2k,*}))\ar@{^{(}->}[r] & \Ext_{A_{*}}^{1}(\mathbb{F}_{p},H_{*}(THH(BP\langle n\rangle);\mathbb{F}_{p}))
	}
\] 
and observe that 
\[ V_k(n)\{t^{k}\}\cong \left ( H_*(THH(BP\langle n \rangle);\mathbb{F}_p)\{t^{k}\} \right )/\im (d_2^{2-2k,*}).\]
We already checked that the $A_{*}$-comodules in $\ker d^{-2k,*}/\im d^{2-2k,*}=M_1 \otimes_{\mathbb{F}_p} M_2$ are bounded above by $M(n)$, so we conclude that the set of $A_{*}$-comodule primitives in $V_{k}(n)$ is also bounded above by $M(n)$ by the $4$-lemma. 

In sum, this proves that the associated graded of a filtration\footnote{Namely, the filtration on the abutment of the approximate Tate spectral sequence \eqref{tTSS}, whose associated graded is $\widehat{E}_{\infty}^{*,*}(R)[k]$.} on 
\[ H_*(TP(BP\langle n\rangle )[k];\mathbb{F}_p)\]
has comodule primitives bounded above by $M(n)$. Because we are working with the {\em approximate} Tate spectral sequence, this filtration is finite in each grading degree: it is of the form 
\[ H_*(TP(BP\langle n\rangle )[k];\mathbb{F}_p) \supset   H_*(TP(BP\langle n\rangle )[k-1];\mathbb{F}_p)\supset \dots \supset  0\]
where $H_s(TP(BP\langle n\rangle )[k];\mathbb{F}_p)=0$ for $k>-s/2$ so in a fixed grading degree $s$ it is the filtration 
\[ H_s(TP(BP\langle n\rangle )[k];\mathbb{F}_p) \supset \dots \supset  H_{s}(TP(BP\langle n\rangle )[-\lceil s/2 \rceil ];\mathbb{F}_p)\supset 0 = 0 = \dots \]
which is a finite filtration, where $\lceil s/2 \rceil$ is the integer ceiling of $s/2$. 

Hence, if there were a nontrivial comodule primitive 
$z\in F_i$ in internal degree $>M(n)$, then either it would map to a comodule primitive in $F_{i}/F_{i-1}$, or it would pull back to an element in $F_{i-1}$. By a finite downward induction on filtration degree, $z$ must be a comodule primitive in $F_{j}/F_{j-1}$ for some $\ell <j<i$. Therefore, it suffices to check that there are no comodule primitives in internal degrees $>M(n)$ in the associated graded of this filtration on $H_*(TP(R)[k];\mathbb{F}_p)$, which we have already done. 
\end{proof} 

Lemma \ref{acyclicization lemma} must surely be known in some form or another: it is quite close to well-developed ideas dating back to Bousfield and Kan's book \cite{MR0365573}. We provide a proof since we have not been able to locate the result in the existing literature. We will use the following notation: given a prime number $p$ and a spectrum $X$, we write $\gamma_pX$ for the homotopy fiber of the $p$-completion map $X \rightarrow X^{\wedge}_p$. 
\begin{lemma}\label{acyclicization lemma}
Let $p$ be a prime number, and let $\{Y_{i}\}$ be a sequence of $p$-local spectra. Suppose that, for each integer $n$, the abelian group $\pi_*(Y_n)$ is $p$-reduced. Suppose furthermore that the derived limit $R^1\lim_n \pi_*(Y_n)$ is trivial. Then the homotopy limit $\holim_i \gamma_p(Y_i)$ is $S/p$-acyclic, i.e., $\left(\holim_i \gamma_p(Y_i)\right)_p^{\wedge}$ is contractible.
\end{lemma}
\begin{proof}
Write $\Lambda: \Mod(\mathbb{Z}_{(p)}) \rightarrow\Mod(\mathbb{Z}_{(p)})$ for the $p$-adic completion functor, i.e., $\Lambda M = \lim_{n} M/p^nM$. It is well-known that $\Lambda$ is neither left nor right exact, and consequently its $0$th left-derived functor $L_0\Lambda$ may fail to coincide with $\Lambda$ itself; see \cite{MR2855123} or the appendix of \cite{MR1388895} for surveys of these ideas. By a special case of Harrison duality\footnote{Also a special case of dualities that generalize Harrison duality, e.g. Matlis duality \cite{MR0178025} and Greenlees-May duality \cite{MR1172439}.} \cite{MR0104728}, the group $L_n\Lambda M$ is naturally isomorphic to $\Ext^{1-n}_{\mathbb{Z}}\left(\mathbb{Z}/p^{\infty},M\right)$, hence vanishes if $n>1$.

For each integer $j$, we have a commutative diagram with exact columns
\[\xymatrix{ 
 \hom_{\mathbb{Z}_{(p)}}\left(\mathbb{Q},\pi_j(X)\right) \ar[d] & \\
  \hom_{\mathbb{Z}_{(p)}}\left(\mathbb{Z}_{(p)},\pi_j(X)\right) \ar[r]^(.6){\cong} \ar[d] &\pi_j(X) \ar[d] \\
  \Ext^1_{\mathbb{Z}_{(p)}}\left(\mathbb{Z}/p^{\infty},\pi_j(X)\right) \ar[r]^(.6){\cong}\ar[d] & L_0\Lambda\pi_j(X) \\
  \Ext^1_{\mathbb{Z}_{(p)}}\left(\mathbb{Q},\pi_j(X)\right) \ar[d] &\\
  0 & 
}\]
induced by the short exact sequence of abelian groups
\[ 0 \rightarrow \mathbb{Z}_{(p)}\rightarrow \mathbb{Q}\rightarrow \mathbb{Z}/p^{\infty}\rightarrow 0 .\]
The group $\Ext^1_{\mathbb{Z}_{(p)}}\left(\mathbb{Q},M\right)$ is uniquely $p$-divisible\footnote{This is of course classical. An easy way to see that it is true: first, note that $\hom_{\mathbb{Z}_{(p)}}(U,D)$ is uniquely $p$-divisible whenever $D$ is $p$-divisible and $U$ is uniquely $p$-divisible. Now embed $M$ into an injective $\mathbb{Z}_{(p)}$-module $I$, so that $I$ and $I/M$ are both $p$-divisible. Then $\Ext^1_{\mathbb{Z}_{(p)}}(\mathbb{Q},M)$ is the cokernel of a homomorphism $\hom_{\mathbb{Z}_{(p)}}(\mathbb{Q},I) \rightarrow \hom_{\mathbb{Z}_{(p)}}(\mathbb{Q},I/M)$ whose domain and codomain are both uniquely $p$-divisible.} for any $\mathbb{Z}_{(p)}$-module $M$. 
Consequently the cokernel of the derived completion map 
\begin{align}\label{derived cpltn map 1}
 \pi_j(X) &\rightarrow L_0\Lambda\pi_j(X)
\end{align}
is uniquely $p$-divisible. Since $\pi_j(X)$ was assumed to be $p$-reduced, the kernel of \eqref{derived cpltn map 1} is trivial.
 
It is classical (see Proposition 2.5 in \cite{MR551009}) that, for any spectrum $X$, we have a short exact sequence
\[ 0 
  \rightarrow L_0\Lambda \pi_j(X)
  \rightarrow \pi_j\left( X_p^{\wedge}\right) 
  \rightarrow L_1\Lambda\pi_{j-1}(X)
  \rightarrow 0\]
for each integer $j$. For each $i$, the group $L_1\Lambda\pi_*Y_i\cong \hom_{\mathbb{Z}}\left(\mathbb{Z}/p^{\infty},\pi_*(Y_i)\right)$ is trivial since $\pi_*(Y_i)$ was assumed to be $p$-reduced. 

Hence $\pi_*(\gamma_pY_i)$ is uniquely $p$-divisible for all $i$.
A limit and derived limit of uniquely $p$-divisible $\mathbb{Z}_{(p)}$-modules remains uniquely $p$-divisible, since such a limit and derived limit can simply be calculated in $\mathbb{Q}$-vector spaces. Hence $\holim_i \gamma_pY_i$ is an $H\mathbb{Q}$-module spectrum, hence is $S/p$-acyclic. 
\end{proof}

\begin{lemma}\label{lem: finiteness}
Suppose $R$ is an $\mathbb{E}_{1}$-ring spectrum satisfying 
\[\mathrm{Ext}_{A_{*}}^{*,*}(\mathbb{F}_{p},H_{*}(R))\cong \mathbb{F}_{p}[x_{i} : 1\le i \le n]\] 
with $x_{i}$ in even, nonnegative internal degrees for $i\ge 1$. Then, there exists an integer $L$ such that 
$\pi_{s}\left ( \mathrm{TP}(R)/p \right )$ is finite for all $s\ge L$. 
\end{lemma}
\begin{proof}
Consider the functor $\mathbb{Z}^{\op}\to \mathrm{Sp}$ defined by 
$	 
\mathrm{fil}_{\mathbb{F}_{p}}^{q}(R)=\mathrm{Tot} \left ( \text{Wh}^{q}(R \wedge H\mathbb{F}_{p}^{\wedge \bullet +1})) \right )
$
with associated graded 
$
	\mathrm{gr}_{\mathbb{F}_{p}}^{s}(R)=\mathrm{Tot} H\pi_{s} (R \wedge H\mathbb{F}_{p}^{\wedge \bullet +1}).
$ 
Then as in \cite{AKS18}, we consider the filtered object $ \mathrm{TP}(\mathrm{fil}_{\mathbb{F}_{p}}^{\bullet}(R))/p$, where we take the quotient in filtered spectra with $p$ in filtration $0$. By \cite[Theorem 3.3.10]{AKS18} (cf. \cite[
Corollary 4.14]{Kee25}), we know that there is an $\mathbb{T}$-equivariant equivalence 
\[ \mathrm{gr}^{*}\mathrm{THH}(\mathrm{fil}_{\mathbb{F}_{p}}^{\bullet}(R)) \simeq \mathrm{THH}(\mathrm{gr}_{\mathbb{F}_{p}}^{*}(R)) .\]
Also, the functor $\mathrm{gr}^{*}$ is a left adjoint by \cite[Lemma 3.30]{GP18} so it commutes with homotopy colimits and we have 
\[ 
	\mathrm{gr}^{*} \left ( \mathrm{THH}(\mathrm{fil}_{\mathbb{F}_{p}}^{\bullet}(R))_{h\mathbb{T}} \right ) \simeq \left (  \mathrm{gr}^{*}\mathrm{THH}(\mathrm{fil}_{\mathbb{F}_{p}}^{*}(R))  \right )_{h\mathbb{T}}\simeq \mathrm{THH}(\mathrm{gr}_{\mathbb{F}_{p}}^{*}(R))_{h\mathbb{T}} .
\]
Since $B\mathbb{T}$ has finite skeleta $\mathrm{sk}_{n}B\mathbb{T}$ using the standard simplicial CW filtration on $B\mathbb{T}=\mathbb{C}P^{\infty}$, we know that 
   \[ 
	 \left ( \mathrm{gr}^{*} \lim_{\mathrm{sk}_{n}B\mathbb{T}} \left ( \mathrm{THH}(\mathrm{fil}_{\mathbb{F}_{p}}^{\bullet}(R))\right ) \right ) \simeq \lim_{\mathrm{sk}_{n}B\mathbb{T}} \mathrm{gr}^{*} \left (  \left ( \mathrm{THH}(\mathrm{fil}_{\mathbb{F}_{p}}^{*}(R) ) \right )  \right )\simeq  \lim_{\mathrm{sk}_{n}B\mathbb{T}} \mathrm{THH}(\mathrm{gr}_{\mathbb{F}_{p}}^{*}(R)) .
\]
Finally, the canonical map 
   \[ 
\mathrm{gr}^{s} \left (\left ( \mathrm{THH}(\mathrm{fil}_{\mathbb{F}_{p}}^{\bullet}(R))\right )^{h\mathbb{T}}\right )  \longrightarrow \lim_{n}\left ( \mathrm{gr}^{s}	\lim_{\mathrm{sk}_{n}B\mathbb{T}}  \left (  \mathrm{THH}(\mathrm{fil}_{\mathbb{F}_{p}}^{\bullet}(R)) \right ) \right )
\]
is an equivalence because
\[  \lim_{n}\mathrm{gr}^{s} F_{n}(R) \simeq 0\]
where 
\[ F_{n}\colon = \fib \left ( (\mathrm{THH}(\mathrm{fil}_{\mathbb{F}_{p}}^{\bullet}(R)))^{h\mathbb{T}} \to \lim_{\mathrm{sk}_{n}B\mathbb{T}}  (  \mathrm{THH}(\mathrm{fil}_{\mathbb{F}_{p}}^{\bullet}(R))  ) \right ).\]
We then use the fiber sequence 
\[ \mathrm{THH}(\mathrm{fil}_{\mathbb{F}_{p}}^{q}(R))^{h\mathbb{T}}\to  \mathrm{THH}(\mathrm{fil}_{\mathbb{F}_{p}}^{q}(R))^{t\mathbb{T}}\to  \Sigma^{2}\mathrm{THH}(\mathrm{fil}_{\mathbb{F}_{p}}^{q}(R))_{h\mathbb{T}}\]
to conclude that 
\[ 
	\mathrm{gr}^{*} \left ( \mathrm{THH}(\mathrm{fil}_{\mathbb{F}_{p}}^{\bullet}(R))^{t\mathbb{T}} \right ) \simeq \mathrm{THH}(\mathrm{gr}_{\mathbb{F}_{p}}^{*}(R))^{t\mathbb{T}} .
\]

Finally, there is an equivalence
\[ 
	\mathrm{TP}(\mathrm{gr}_{\mathbb{F}_{p}}^{*}(R))/p\simeq  \gr^{*} \left ( \mathrm{TP}(\mathrm{fil}_{\mathbb{F}_{p}}^{*}(R))/p \right ).
\]
For conditional convergence, we observe that 
\[\lim_{s}\mathrm{TP}(\mathrm{fil}_{\mathbb{F}_{p}}^{s}(R))/p=0. \]
This follows because $\mathrm{THH}(\mathrm{fil}_{\mathbb{F}_{p}}^{s}(R))$ is increasingly connective as $s\rightarrow\infty$, implying that 
\[ \lim_{s}  ( \mathrm{THH}(\mathrm{fil}_{\mathbb{F}_{p}}^{s}(R))^{h\mathbb{T}}) = ( \lim_{s}   \mathrm{THH}(\mathrm{fil}_{\mathbb{F}_{p}}^{s}(R))^{h\mathbb{T}})=0\]
and 
\[ \lim_{s}  ( \mathrm{THH}(\mathrm{fil}_{\mathbb{F}_{p}}^{s}(R))_{h\mathbb{T}}) = ( \lim_{s}   \mathrm{THH}(\mathrm{fil}_{\mathbb{F}_{p}}^{s}(R))_{h\mathbb{T}})=0\]
where we also use the fact that $B\mathbb{T}$ has finite skeleta for the last identification. 
It is also clear that 
\[ \underset{s}{\colim}\mathrm{TP}(\mathrm{fil}_{\mathbb{F}_{p}}^{s}(R))/p=\mathrm{TP}(R)/p\]
for similar reasons, so the associated spectral sequence 
\[ E_{1}^{s,2t-s}=\pi_{s}([\mathrm{TP}(\mathrm{gr}_{\mathbb{F}_{p}}^{*}(R))/p]^{t})\implies \pi_{s}\mathrm{TP}(R)/p\]
conditionally converges in the sense of \cite{MR1718076}. 

By \cite[Lemma IV.4.12]{NS18}, there is an equivalence
\[\mathrm{TP}(\mathrm{gr}_{\mathbb{F}_{p}}^{*}(R))/p\simeq \mathrm{THH}(\mathrm{gr}_{\mathbb{F}_{p}}^{*}(R))^{tC_{p}}\]
and by \cite[Proposition 4.2.2]{HW22} we know that the Frobenius map
\[ \varphi_{p}\colon  \mathrm{THH}(\mathrm{gr}_{\mathbb{F}_{p}}^{*}(R))\to \mathrm{THH}(\mathrm{gr}_{\mathbb{F}_{p}}^{*}(R))^{tC_{p}}\]
is an isomorphism on $\pi_{s}$ for all $s\ge L$ for some integer $L$. It is clear that $\pi_{s}\mathrm{THH}(\mathrm{gr}_{\mathbb{F}_{p}}^{*}(R))$
is a finitely type graded $\mathbb{F}_{p}$-vector space for each $s$, by two standard K\"unneth spectral sequence arguments. Consequently, we conclude that the $\mathbb{F}_{p}$-vector space $\pi_{s}(\mathrm{TP}(\mathrm{gr}_{\mathbb{F}_{p}}^{*}(R))/p)$ is finite for each $s\ge L$. Finally, we observe that we can apply the Beilinson $t$-structure \cite[II.2.1]{Hed20} to produce a filtered object
\[ \tau_{\ge L}^{\textup{Bei}}\mathrm{TP}(\mathrm{fil}_{\mathbb{F}_{p}}^{s}(R)) \]
whose associated spectral sequence has $E_{1}$-term 
\[ E_{1}^{s,*}=\begin{cases} 
\mathrm{THH}(\mathrm{gr}_{\mathbb{F}_{p}}^{*}(R) )^{tC_{p}}& \text{ if } s\ge L \\
0 &  \text{otherwise}
\end{cases}
\] 
and converges to $\pi_{*}\Wh^{L}\mathrm{TP}(R)/p$. 
Consequently, this spectral sequence strongly converges and we conclude that the groups $\pi_{s}(\mathrm{TP}(R)/p)$ are finite for all $s\ge L$ as desired. 
\end{proof}

\begin{theorem} \label{main application}
Let $m,n$ be nonnegative integers such that $m\geq n+2$. Let $R$ be a $p$-primary $E_{2}$ form of $BP\langle n\rangle$ satisfying Running Assumption \ref{assumption}. Suppose $p$ is odd. Then there are isomorphisms
\begin{align}
\nonumber K(m)_*(TP(R))\cong &0 \text{ and } \\
\nonumber K(m)_*(TC^{-}(R))\cong & 0 .
\end{align} 
\end{theorem}
\begin{proof}
We claim that the sequence of spectra
\[ \dots \rightarrow TP(R)[1]^{\wedge}_p \rightarrow TP(R)[0]^{\wedge}_p \rightarrow TP(R)[-1]^{\wedge}_p \rightarrow \dots\]
is $K(m)$-amenable.
We check the conditions for $K(m)$-amenability, given in Definition \ref{def of Km-amenability}, as follows:
\begin{enumerate}
\item It is straightforward from the definitions that $TP(R)[k]^{\wedge}_p$ is $p$-complete and bounded below.
\item The homology $H_*(TP(R)[k]^{\wedge}_p;\mathbb{F}_p)$ is finite-type, for all $k$. 
\item The pro-triviality of the sequence $\{H(H_*(TP(R)[\bullet]^{\wedge}_p ; \mathbb{F}_p),Q_{n})\}$ is Proposition \ref{Margoliscomputaton}.
\item There exists an integer $L$ such that the groups $(\pi_{s}TP(R))/p$ are finite for $s\ge L$ by Lemma \ref{lem: finiteness}.
\item There exists an integer $M$ such that the spectrum $TP(R)[k]^{\wedge}_p$ satisfies condition $H(M)$ for all $k$, by Lemma \ref{hyp2}.
\end{enumerate}
Hence $\holim_k \left(TP(R)[k]^{\wedge}_p\right)$ is $K(m)$-acyclic, by Theorem \ref{thm on holim of margolis-acyclics with lim1 vanishing}. 

Consequently have a commutative square of spectra whose rows and columns are homotopy fiber sequences:
\begin{equation}\label{comm diag 20932}\xymatrix{
 \gamma_p\holim_k\gamma_p\left(TP(R)[k]\right) \ar[r]^(.6){\simeq}\ar[d]_{\simeq}
  & \gamma_p TP(R) \ar[d] 
  & \\
 \holim_k\gamma_p\left(TP(R)[k]\right) \ar[r]
  & TP(R) \ar[r]\ar[d]
  & \holim_k\left( TP(R)[k]^{\wedge}_p\right) \ar[d]^{\simeq} \\
  & \left(TP(R)\right)^{\wedge}_p \ar[r]_(.4){\simeq} 
  & \left( \holim_k\left( (TP(R)[k])^{\wedge}_p\right)\right)^{\wedge}_p.
}\end{equation}
Every $S/p$-local equivalence is also a $K(m)$-local equivalence, so the spectra in the top row of \eqref{comm diag 20932} are all $K(m)$-acyclic. We have already shown that $\holim_k \left(TP(R)[k]^{\wedge}_p\right)$ is $K(m)$-acyclic. Consequently $TP(R)$ and $\left(TP(R)\right)^{\wedge}_p$ are each $K(m)$-acyclic.

Since $\holim_k TP(R)[k]\simeq TP(R)\simeq THH(R)^{t\mathbb{T}}$, the vanishing of 
\[K(m)_*\holim_k TP(R)[k]\]for $m\geq n+2$ together with the fiber sequence
\[ \Sigma THH(R )_{h\mathbb{T}} \to THH(R )^{h\mathbb{T}}\to THH(R )^{t\mathbb{T}} \]
implies that the map $\Sigma THH(R )_{h\mathbb{T}} \to THH(R )^{h\mathbb{T}}$ is a $K(m)$-local equivalence for $m\geq n+2$.

Consequently, if we prove that $THH(R )_{h\mathbb{T}}$ is also $K(m)$-acyclic, then $TC^-(R) \simeq THH(R )^{h\mathbb{T}}$ must also be $K(m)$-acyclic. This, however, is quite straightforward: when $m\ge n+1$, the vanishing of $K(m)_*(BP\langle n \rangle)$ was proven in \cite[Thm. 2.1(d),(f),(i)]{MR737778}. Hence $THH(R)$ is an algebra over a $K(m)$-acyclic ring spectrum (namely, $R$), hence $THH(R)$ is $K(m)$-acyclic. Smashing with $K(m)$ commutes with homotopy colimits, so
\begin{align*}
 K(m)\smash (THH(R)_{h\mathbb{T}})
  &\simeq  \left(K(m)\smash THH(R)\right)_{h\mathbb{T}} \simeq 0\end{align*}
for $m\ge n+1$.
\end{proof}
\begin{corollary} 
Suppose $p$ is odd. If $B\langle n\rangle $ is a $p$-primary $E_{2}$ form of $BP\langle n\rangle$ satisfying Running Assumption \ref{assumption}, then there are weak equivalences 
\[ L_{K(m)}K(B\langle n\rangle  )\simeq 0 \]
for $m\ge n+2\ge 2$.
\end{corollary} 
\begin{proof}
We first show that there is an equivalence 
\[ L_{K(m)}TC(B\langle n\rangle )\simeq 0\]
for $m\ge n+2\ge 2$. This follows by Theorem \ref{main application} together with the long exact sequence in Morava $K$-theory associated to the homotopy  fiber sequence 
\[ TC(B\langle n\rangle )^{\wedge}_p \to TC^{-}(B\langle n\rangle )_p^{\wedge}\overset{\text{can}-\varphi_{p}^{h\mathbb{T}}}{\longrightarrow} TP(B\langle n\rangle)_p^{\wedge}\]
of \cite{NS18}. 
The fact that 
\[ L_{K(m)}K(\mathbb{Z}_{(p)})=0\]
for $m\ge 2$ then follows by \cite[Theorem D]{HM97}. 
For $n>0$, the result follows from Theorem \ref{main application} by the Dundas--Goodwillie--McCarthy theorem \cite[Theorem 7.2.2.1]{MR3013261} and the $n=0$ case, which together imply that \[ L_{K(m)}K(BP\langle n\rangle ) \simeq L_{K(m)}TC(BP\langle n\rangle)\]
for $m\ge n+2\ge 3$. 
\end{proof}

\appendix

\section{Brief review of Margolis homology.}\label{appendix on basics of margolis homology}
This appendix, which does not logically rely on anything earlier in the paper, consists of material that is well-known to users of Margolis homology. This material is not difficult and certainly not new. Nevertheless we include it in this paper because, for some of this material, we do not know of a clear and straightforward account in the existing literature.

Given a graded ring $R$, we write $\gr\Mod(R)$ for the category of graded $R$-modules and grading-preserving $R$-module homomorphisms.
\begin{definition}\label{def of margolis homology}
Let $k$ be a field and let $E(Q)$ be the exterior $k$-algebra on a single homogeneous generator $Q$ in an odd grading degree $\left|Q\right|$.
By {\em Margolis $Q$-homology} we mean the functor $H(-;Q)\colon\thinspace \gr\Mod(E(Q)) \rightarrow\gr\Mod(E(Q))$ given on a graded $E(Q)$-module $M$ by the quotient
\[ H(M; Q) = \left( \ker (M \stackrel{Q}{\longrightarrow} \Sigma^{-\left|Q\right|}M) \right)/\left( \im (\Sigma^{\left|Q\right|}M \stackrel{Q}{\longrightarrow} M) \right).\]
\end{definition}
It is routine to verify that $H(-; Q_n)$ sends short exact sequences of graded $E(Q_n)$-modules to long exact sequences.

Here is a quick note on gradings; it is extremely elementary, but not taking a moment to ``fix notations'' on this point tends to lead to sign errors in the gradings.
\begin{convention}\label{convention on grading degrees}
Given a graded ring $R$ and graded $R$-modules $M$ and $N$, we write $\Hom_R(M,N)$ for the degree-preserving $R$-linear morphisms $M\rightarrow N$, and we write $\underline{\Hom}_R(M,N)$ for the graded abelian group whose degree $n$ summand is $\Hom_R(\Sigma^n M,N)$.
We write $\Ext^{s,*}_R(M,N)$ for the graded abelian group whose degree $t$ summand is $\Ext^{s,t}_R(M,N)$, and we refer to this grading as the {\em internal} or {\em topological} grading, to distinguish it from the {\em cohomological} degree given by $s$.
\end{convention}
In particular, the $k$-linear dual of a graded $k$-vector space has the signs of the gradings reversed, i.e., \[\left( \Sigma^n V\right)^* = \underline{\Hom}_k\left(\Sigma^n V,k\right) \cong \Sigma^{-n}\left(V^*\right).\]

Now given a spectrum $X$, the action of $Q_n$ on $H^*(X; \mathbb{F}_p)$ is the one induced in homotopy by the map of function spectra $F(X, H\mathbb{F}_p) \rightarrow F(X, \Sigma^{2p^n-1}H\mathbb{F}_p)$ induced by the composite~\eqref{Qn map of spectra}. Somewhat less famous than the action of Steenrod operations on mod $p$ cohomology, we have also the dual action of Steenrod operations on mod $p$ homology: the action of $Q_n$ on $H_*(X; \mathbb{F}_p)$ is the one induced in homotopy by the map of spectra $X\smash H\mathbb{F}_p\rightarrow X\smash \Sigma^{2p^n-1}H\mathbb{F}_p$ induced by the composite~\eqref{Qn map of spectra}. 
These operations are $\mathbb{F}_p$-linearly dual under the isomorphism $H^i(X; \mathbb{F}_p) \cong \Hom_{\mathbb{F}_p}(H_i(X; \mathbb{F}_p), \mathbb{F}_p)$; 
see Proposition~III.13.5 of~\cite{MR1324104} or Theorem~IV.4.5 of~\cite{MR1417719}. 

\begin{lemma}\label{dualization iso}
Let $k,E(Q),\left|Q\right|$ be as in Definition \ref{def of margolis homology}. Let $M$ be a graded $E(Q)$-module. Then we have an isomorphism of graded $E(Q)$-modules
\[ \underline{\Hom}_{E(Q)}(M,E(Q))\cong \underline{\Hom}_{k}(M,\Sigma^{\left|Q\right|}k) \]
natural in the choice of $M$.
\end{lemma}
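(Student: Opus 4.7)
The statement is a form of Frobenius duality: as a finite-dimensional graded $k$-algebra, $E(Q)$ is self-dual up to a shift by its top degree $|Q|$, and the lemma expresses this self-duality at the level of hom-modules over an arbitrary $E(Q)$-module $M$. Accordingly, I would prove the lemma by constructing the isomorphism and its inverse explicitly, rather than appealing to any general machinery.

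Let $\pi\colon E(Q)\to\Sigma^{|Q|}k$ denote the $k$-linear projection onto the $Q$-summand, which is a degree-$0$ $k$-linear map because $Q$ sits in degree $|Q|$. Post-composition with $\pi$ yields
\[ \phi_M\colon \underline{\hom}_{E(Q)}(M,E(Q))\longrightarrow \underline{\hom}_k(M,\Sigma^{|Q|}k),\qquad \phi_M(f)=\pi\circ f. \]
This is natural in $M$ by functoriality of post-composition, and it is $E(Q)$-linear because on both sides the $E(Q)$-action is induced from the $E(Q)$-action on $M$ via pre-composition, which $\phi_M$ leaves untouched.

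For the inverse, given $g\colon M\to\Sigma^{|Q|}k$ I would define $\psi_M(g)\colon M\to E(Q)$ by
\[ \psi_M(g)(m)=g(Qm)\cdot 1 + g(m)\cdot Q, \]
where the first summand is placed in the degree-$0$ summand of $E(Q)$ and the second in its degree-$|Q|$ summand. A one-line calculation using $Q^2=0$ shows that $\psi_M(g)$ is $E(Q)$-linear, and the identities $\phi_M\psi_M=\mathrm{id}$ and $\psi_M\phi_M=\mathrm{id}$ follow from decomposing an arbitrary $f\in\underline{\hom}_{E(Q)}(M,E(Q))$ along the $k$-basis $\{1,Q\}$: writing $f(m)=f_0(m)\cdot 1+f_1(m)\cdot Q$, the $E(Q)$-linearity of $f$ forces $f_1(Qm)=f_0(m)$ and $f_0(Qm)=0$, which is precisely what $\psi_M\phi_M$ uses to recover $f$ from $\pi\circ f=f_1$.

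The only real subtlety will be bookkeeping of Koszul signs arising from $|Q|$ being odd, and making sure the grading shift $\Sigma^{|Q|}k$ (as opposed to $\Sigma^{-|Q|}k$) is the correct one; since both $\phi_M$ and $\psi_M$ are defined by one-line formulas, these signs and degrees can be tracked by direct inspection and do not alter the structure of the argument.
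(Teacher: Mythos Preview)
Your proof is correct and shares the same forward map with the paper: both post-compose with the projection $E(Q)\to\Sigma^{|Q|}k$ onto the top degree. The difference lies in how the isomorphism is established. The paper does not build an explicit inverse; instead it argues structurally: both $\underline{\hom}_{E(Q)}(-,E(Q))$ and $\underline{\hom}_k(-,\Sigma^{|Q|}k)$ send coproducts to products, every graded $E(Q)$-module is a coproduct of suspensions of $E(Q)$ and of $k$ (by the Cohen--Kaplansky theorem), and the map is checked to be an isomorphism on these two indecomposables by an injectivity-plus-dimension argument. Your approach is more direct and more elementary, since writing down $\psi_M(g)(m)=g(Qm)\cdot 1+g(m)\cdot Q$ and verifying the two composites avoids any appeal to the structure theory of $E(Q)$-modules; the paper's approach, on the other hand, makes the naturality and compatibility with coproducts completely transparent and sidesteps the sign bookkeeping you flag at the end.
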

\begin{proof}
Since each of the functors $\underline{\Hom}_{E(Q)}(-,E(Q))$ and $\underline{\Hom}_{k}(-,\Sigma^{\left|Q\right|}k)$ send colimits to limits, it suffices to prove the isomorphism for finitely generated $E(Q)$-modules. Since $E(Q)$ is a PID, all finitely generated $E(Q)$-modules are direct sums of suspensions of $E(Q)$ and $k$. Therefore, it suffices to prove the statement for suspensions of $E(Q)$ and of $k$, but in this case it is clear that the statement holds.
\end{proof}
\begin{prop}\label{margolis homology and ext}
Let $k,E(Q),\left|Q\right|$ be as in Definition \ref{def of margolis homology}. 
Then we have natural isomorphisms of graded $E(Q)$-modules:
\begin{align}
 \label{iso 204947} H(M^*; Q) &\cong H(M; Q)^* , \\ 
 \label{iso 204948} \Ext^{s,*}_{E(Q)}(k,M) &\cong \Sigma^{-s\left|Q\right|} H(M; Q) \mbox{\ \ if\ } s>0, \\ 
 \label{iso 204949} \Ext^{s,*}_{E(Q)}(M,k) &\cong \Sigma^{-(s+1)\left|Q\right|} H\left(\underline{\Hom}_{E(Q)}(M,E(Q));Q\right) \mbox{\ \ if\ } s>0, \\ 
 \label{iso 204950}                      &\cong \Sigma^{-s\left|Q\right|} \left( H\left(M;Q\right)^*\right) \mbox{\ \ if\ } s>0.
\end{align}
natural in the choice of graded $E(Q)$-module $M$. (The notation $M^*$ is for the graded $k$-linear dual of $M$, i.e., $M^* = \underline{\Hom}_k(M,k)$.)
\end{prop}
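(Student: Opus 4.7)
The plan is to establish the four isomorphisms in the following order: first \eqref{iso 204948} via a projective resolution of $k$; then \eqref{iso 204949} via an injective coresolution of $k$ (exploiting that $E(Q)$ is a Frobenius algebra); then \eqref{iso 204947} by a Cohen--Kaplansky decomposition; and finally \eqref{iso 204950} as a consequence of the preceding three together with Lemma \ref{dualization iso}. Naturality in $M$ is clear by construction at each step, so it need not be separately addressed.

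For \eqref{iso 204948}, I would use the standard $2$-periodic minimal free resolution
\[
\cdots \xrightarrow{Q} \Sigma^{2|Q|} E(Q) \xrightarrow{Q} \Sigma^{|Q|} E(Q) \xrightarrow{Q} E(Q) \twoheadrightarrow k
\]
of $k$ as an $E(Q)$-module. Applying $\underline{\hom}_{E(Q)}(-, M)$ and identifying $\underline{\hom}_{E(Q)}(\Sigma^n E(Q), M) \cong \Sigma^{-n} M$ yields the cochain complex
\[
M \xrightarrow{Q} \Sigma^{-|Q|} M \xrightarrow{Q} \Sigma^{-2|Q|} M \xrightarrow{Q} \cdots,
\]
whose cohomology in cohomological degree $s \geq 1$ is by inspection $\Sigma^{-s|Q|} H(M; Q)$.

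For \eqref{iso 204949}, I would construct an injective coresolution of $k$ by iterating the short exact sequence $0 \to k \to \Sigma^{-|Q|} E(Q) \to \Sigma^{-|Q|} k \to 0$, in which the first map sends the generator $1\in k$ to $Q\in \Sigma^{-|Q|}E(Q)$ (one checks this is degree-preserving, $E(Q)$-linear, and has cokernel $\Sigma^{-|Q|}k$). Since $E(Q)$ is Frobenius, hence self-injective, iteration produces the injective coresolution
\[
0 \to k \to \Sigma^{-|Q|} E(Q) \xrightarrow{Q} \Sigma^{-2|Q|} E(Q) \xrightarrow{Q} \Sigma^{-3|Q|} E(Q) \xrightarrow{Q} \cdots.
\]
Applying $\underline{\hom}_{E(Q)}(M, -)$ and using $\underline{\hom}_{E(Q)}(M, \Sigma^{-(s+1)|Q|} E(Q)) \cong \Sigma^{-(s+1)|Q|} \underline{\hom}_{E(Q)}(M, E(Q))$ gives a cochain complex whose cohomology in cohomological degree $s \geq 1$ is $\Sigma^{-(s+1)|Q|} H(\underline{\hom}_{E(Q)}(M, E(Q)); Q)$, yielding \eqref{iso 204949}.

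For \eqref{iso 204947}, I would invoke the Cohen--Kaplansky theorem \cite{MR0043073} to decompose $M$ as a coproduct of cyclic $E(Q)$-modules, each of the form $\Sigma^n k$ or $\Sigma^n E(Q)$. The natural comparison map $H(M^*; Q) \to H(M; Q)^*$ coming from the evaluation pairing $M \otimes_k M^* \to k$ is a morphism of coproduct-preserving functors, so it suffices to check that it is an isomorphism on each cyclic summand: on $\Sigma^n k$ both sides equal $\Sigma^{-n} k$, and on $\Sigma^n E(Q)$ both sides vanish. Combining \eqref{iso 204947} with Lemma \ref{dualization iso}, which identifies $\underline{\hom}_{E(Q)}(M, E(Q)) \cong \Sigma^{|Q|} M^*$, then rewrites the right-hand side of \eqref{iso 204949} as $\Sigma^{-(s+1)|Q|}H(\Sigma^{|Q|}M^*; Q) \cong \Sigma^{-s|Q|}H(M^*; Q) \cong \Sigma^{-s|Q|}H(M;Q)^*$, giving \eqref{iso 204950}. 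The main subtlety throughout will be the careful bookkeeping of suspension shifts: the convention $\Ext^{s,-*}$ together with the interplay among suspensions, $k$-linear duality, and the grading convention on $\underline{\hom}$ is prone to off-by-$\Sigma^{\pm|Q|}$ errors, and each of the four isomorphisms has to be matched grade-by-grade to be sure the final answer is correctly stated.
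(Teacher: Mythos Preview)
Your argument is correct and, for isomorphisms \eqref{iso 204948}, \eqref{iso 204949}, and \eqref{iso 204950}, essentially identical to the paper's: the same projective resolution, the same injective coresolution coming from self-injectivity of $E(Q)$, and the same appeal to Lemma \ref{dualization iso} to pass from \eqref{iso 204949} to \eqref{iso 204950}.

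The one genuine difference is your proof of \eqref{iso 204947}. The paper handles this more directly: it observes that $H(M;Q)$ is the homology of the doubly-infinite periodic chain complex $\cdots \xrightarrow{Q} \Sigma^{|Q|}M \xrightarrow{Q} M \xrightarrow{Q} \Sigma^{-|Q|}M \xrightarrow{Q} \cdots$, and since $k$-linear dualization of $Q$-multiplication on $M$ is $Q$-multiplication on $M^*$, the $k$-linear dual of this chain complex is the corresponding complex for $M^*$. Over a field the universal coefficient theorem has no $\Ext^1$ correction, so $H(M^*;Q)\cong H(M;Q)^*$ immediately. Your route via Cohen--Kaplansky is also valid, but note a small terminological slip: the two functors $M\mapsto H(M^*;Q)$ and $M\mapsto H(M;Q)^*$ are contravariant, hence they convert \emph{coproducts to products}, not ``coproduct-preserving'' as you wrote; the reduction to indecomposables still goes through once you phrase it this way. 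The paper's argument is slightly cleaner here since it avoids any structure theorem and works uniformly, while your argument has the advantage of making the isomorphism transparently compatible with the decomposition used elsewhere (e.g.\ in Lemma \ref{dualization iso}).
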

\begin{proof}
We handle each of the isomorphisms \eqref{iso 204947} through \eqref{iso 204950} in turn:

 The $Q$-Margolis homology of $M$ is the homology of the chain complex
\begin{align}\label{cc 1130} \dots \stackrel{Q}{\longrightarrow} \Sigma^{\left|Q\right|}M \stackrel{Q}{\longrightarrow} M \stackrel{Q}{\longrightarrow} \Sigma^{-\left|Q\right|}M \stackrel{Q}{\longrightarrow} \dots\end{align}
and so, since the $k$-linear dual of the multiplication-by-$Q$ map on a $E(Q)$-module is the multiplication-by-$Q$ map on the $k$-linear dual of that module, the cohomology of the $k$-linear dual of the chain complex \eqref{cc 1130} is $H(M^*; Q)$. So the classical universal coefficient sequence for chain complexes (e.g. as in 3.6.5 of \cite{MR1269324}) yields the isomorphism \eqref{iso 204947}.

Applying $\underline{\Hom}_{E(Q)}( -,M)$ to the projective graded $E(Q)$-module resolution of $k$
\begin{align}\label{proj res 0394} 0 \leftarrow E(Q) \stackrel{Q}{\longleftarrow} \Sigma^{\left| Q\right|} E(Q) \stackrel{Q}{\longleftarrow} \Sigma^{2\left| Q\right|} E(Q) \stackrel{Q}{\longleftarrow} \dots\end{align}
yields the cochain complex 
\[ 0 \rightarrow M \stackrel{Q}{\longrightarrow} \Sigma^{-\left| Q\right|} M \stackrel{Q}{\longrightarrow} \Sigma^{-2\left| Q\right|} M \stackrel{Q}{\longrightarrow} \dots\]
whose homology is $\Sigma^{-s\left| Q\right|} H(M; Q)$ in each cohomological degree $s>0$. This gives us isomorphism \eqref{iso 204948}. (See Convention \ref{convention on grading degrees} for the sign change in grading degrees.)

We take advantage of the fact that $E(Q)$ is self-injective, so that
\begin{align}\label{res 0394} 0 \rightarrow \Sigma^{-\left| Q\right|}E(Q) \stackrel{Q}{\longrightarrow} \Sigma^{-2\left| Q\right|} E(Q) \stackrel{Q}{\longrightarrow} \Sigma^{-3\left| Q\right|} E(Q) \stackrel{Q}{\longrightarrow} \dots\end{align}
is an injective graded $E(Q)$-module resolution of $k$. 
Applying $\underline{\Hom}_{E(Q)}\left( M,-\right)$ to \eqref{res 0394} yields the cochain complex
\begin{align}\label{res 0394a} 0 \rightarrow \underline{\Hom}_{E(Q)}(M,\Sigma^{-\left| Q\right|}E(Q)) \stackrel{Q}{\longrightarrow} \underline{\Hom}_{E(Q)}(M,\Sigma^{-2\left| Q\right|} E(Q)) \stackrel{Q}{\longrightarrow} 
\dots,\end{align}
hence isomorphism \eqref{iso 204949}.

Isomorphism \eqref{iso 204950} then follows from the chain of isomorphisms
\begin{align*}
 \Sigma^{-(s+1)\left|Q\right|} H\left(\underline{\Hom}_{E(Q)}(M,E(Q));Q\right)
  &\cong \Sigma^{-(s+1)\left|Q\right|} H\left(\underline{\Hom}_{k}(M,\Sigma^{\left|Q\right|}k);Q\right) \\
  &\cong \Sigma^{-(s+1)\left|Q\right|} H\left(\Sigma^{\left|Q\right|}M^*;Q\right) \\
  &\cong \Sigma^{-s\left|Q\right|} \left(H\left(M;Q\right)^*\right) ,
\end{align*}
due to Lemma \ref{dualization iso}.
\end{proof} 

Proposition \ref{duality prop} is a simple cohomological duality. For clarity, we drop the gradings:
\begin{prop} \label{duality prop} 
Let $k,E(Q),\left|Q\right|$ be as in Definition \ref{def of margolis homology}.
For each $E(Q)$-module $M$, we have an isomorphism of graded $E(Q)$-modules
\begin{align} \label{iso 15501}  \Ext^{s}_{E(Q)}(M,k) \cong \Ext^{s}_{E(Q)}(k,M^*)\end{align}
for each integer $s$.
If $s>0$, then each side of \eqref{iso 15501} is furthermore isomorphic to
$\Ext^{s,*}_{E(Q)}(k,M)^*.$ 
\end{prop}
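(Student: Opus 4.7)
The plan is to deduce this proposition directly from the Margolis-homology identifications already established in Proposition \ref{margolis homology and ext} (together with Lemma \ref{dualization iso}), handling the case $s=0$ by a short direct computation and the cases $s<0$ trivially.

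First, for $s<0$ both sides of \eqref{iso 15501} vanish and there is nothing to prove. For $s=0$, I would compute both Hom groups by hand. On the one hand, $\hom_{E(Q)}(M,k)$ consists of those $k$-linear maps $f\colon M\to k$ with $f(Qm)=0$ for every $m\in M$, so $\hom_{E(Q)}(M,k)\cong (M/QM)^*$. On the other hand, $\hom_{E(Q)}(k,M^*)$ is the $E(Q)$-submodule of primitives in $M^*$, i.e.\ the kernel of multiplication by $Q$ on $M^*$; since the $E(Q)$-action on $M^*$ is by precomposition, this kernel is exactly $\{f\in M^*:f|_{QM}=0\}\cong (M/QM)^*$. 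So the $s=0$ case of \eqref{iso 15501} is immediate.

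For $s>0$, I would simply splice together the isomorphisms from Proposition \ref{margolis homology and ext}. On the Ext-into-$k$ side, \eqref{iso 204950} gives an identification of $\Ext^{s,*}_{E(Q)}(M,k)$ (up to a grading shift we are suppressing) with $H(M;Q)^*$. On the Ext-from-$k$ side, \eqref{iso 204948} identifies $\Ext^{s,*}_{E(Q)}(k,M^*)$ with $H(M^*;Q)$, and then \eqref{iso 204947} identifies $H(M^*;Q)$ with $H(M;Q)^*$. Composing these natural isomorphisms gives both sides of \eqref{iso 15501} naturally identified with the same object $H(M;Q)^*$, so they are naturally isomorphic to each other. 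The ``furthermore'' clause is obtained by dualizing \eqref{iso 204948}: $\Ext^{s,*}_{E(Q)}(k,M)^*\cong H(M;Q)^*$, which is the same space.

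The one place where one might expect a complication is the interaction of gradings, since \eqref{iso 204948} and \eqref{iso 204950} each carry an internal shift by $-s|Q|$ and duality reverses signs of gradings. The proposition sidesteps this by explicitly dropping the gradings, so once the natural isomorphisms of underlying ungraded $E(Q)$-modules are assembled there is nothing further to verify. In short, every step is a formal consequence of results already in hand, and I do not anticipate any genuine obstacle beyond careful bookkeeping at $s=0$.
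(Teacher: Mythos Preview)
Your proposal is correct and follows essentially the same route as the paper: for $s>0$ both you and the authors chain together \eqref{iso 204950}, \eqref{iso 204947}, and \eqref{iso 204948} to pass through $H(M;Q)^*$, and the ``furthermore'' clause is obtained identically by dualizing \eqref{iso 204948}. The only cosmetic difference is at $s=0$, where the paper invokes the closed monoidal structure (hom-tensor adjunction) on $E(Q)$-modules in one line while you unwind that adjunction explicitly to $(M/QM)^*$; these are the same argument.
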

\begin{proof}
The $s=0$ case of isomorphism \eqref{iso 15501} is immediate. 
Consequently, for the rest of this proof we assume $s>0$. The hypotheses of Proposition \ref{margolis homology and ext} are then fulfilled.
Stringing together isomorphisms from Proposition \ref{margolis homology and ext}:
\begin{align}
\label{iso 430005050} \Ext^{s}_{E(Q)}(M,k)
                        &\cong \Sigma^{-s\left| Q\right|} \left(H(M; Q)^*\right) \\
\nonumber                        &\cong \Sigma^{-s\left| Q\right|} H(M^*; Q) \\
\nonumber                        &\cong \Ext^{s}_{E(Q)}(k,M^*).
\end{align}
The right-hand side of \eqref{iso 430005050} is also isomorphic to $\Ext^{s}_{E(Q)}(k,M)^*$, by isomorphism \eqref{iso 204948}.
\end{proof}
\begin{corollary}\label{input of Adams for k(n) smash X}
Fix $s\ge 1$ and suppose $H_{*}(X;\mathbb{F}_{p})$ is finite type. Then the graded $\mathbb{F}_p$-vector space $\Ext_{A_{*}}^{s,*}(\mathbb{F}_{p},H_{*}(k(n)\wedge X;\mathbb{F}_{p}))$ is isomorphic to a suspension of $H(H_{*}(X;\mathbb{F}_{p});Q_{n})$.
\end{corollary}
\begin{proof}
Fix $s\ge 1$. We apply the K\"unneth isomorphism and the change-of-rings isomorphism to produce the isomorphism 
\[ \Ext_{A_{*}}^{s,-*}(\mathbb{F}_{p},H_{*}(k(n)\wedge X;\mathbb{F}_{p}))= \Ext_{E(Q_{n})_{*}}^{s,-*}(\mathbb{F}_{p},H_{*}(X;\mathbb{F}_{p}))\]
Since $E(Q_{n})_{*}$ is a self-dual finite-dimensional Hopf algebra and $H_{*}(X;\mathbb{F}_{p})^{*}=H^{*}(X;\mathbb{F}_{p})$, we can apply Proposition \ref{duality prop} to identify $\Ext_{E(Q_{n})_{*}}^{s,-*}(\mathbb{F}_{p},H_{*}(X;\mathbb{F}_{p}))$ with 
$\Ext_{E(Q_{n})}^{s,*}(H^{*}(X;\mathbb{F}_{p}),\mathbb{F}_{p})$. 
Now Proposition \ref{duality prop} finishes the job.
\end{proof}

\bibliography{salch}
%\bibliography{/home/asalch/texmf/tex/salch}{}
\bibliographystyle{plain}
\end{document}